\theoremstyle{plain}
\newtheorem{theorem}{Theorem}[section]
\newtheorem{lemma}[theorem]{Lemma}
\theoremstyle{definition}
\newtheorem{definition}[theorem]{Definition}
\newtheorem{assumption}[theorem]{Assumption}
\theoremstyle{remark}
\newtheorem{remark}{Remark}
\begin{document}

\articletype{}

\title{Long-Run Average Reward Maximization of A Regulated Regime-Switching Diffusion Model}

\author{
	\name{Lingjia Zeng\textsuperscript{a}\thanks{Corresponding author: Manman Li. } and Manman Li\textsuperscript{b}}
	\affil{College of Mathematics and Statistics, Chongqing University, Chongqing, China;
		\\ \textsuperscript{a}Email:20191891@cqu.edu.cn; \textsuperscript{b}Email: lmm@cqu.edu.cn}
}
\maketitle

\begin{abstract}
This study considers an optimal reinsurance, investment, 
and dividend strategy control problem for insurance companies 
in a regulated Markov regime-switching environment, intending to maximize long-run average reward. Unlike existing single or dual strategy studies, 
an integrated control framework is established under solvency constraints, 
allowing investment and dividends only when the surplus process exceeds a minimum cash requirement level. 
To address the analytical difficulties associated with solving HJB equations and stationary distributions 
in high-dimensional state spaces under regime switching, we construct a numerical approximation scheme for the optimal strategy function 
based on Markov chains and neural networks. Furthermore, we establish the convergence of the corresponding sequence of surplus processes and rigorously prove 
that the associated optimal values converge to the true value function. Finally, we provide a numerical example based on the approximate 
dynamic programming method to demonstrate the feasibility of the proposed method. 
\end{abstract}

\begin{keywords}
Long-Run Average Reward; Solvency constraints; Markov chain approximation method; Approximate dynamic programming
\end{keywords}

\section{Introduction}

  Research on the optimal control problems of insurance companies can be traced back 
  to the classical dividend model introduced by \cite{de1957impostazione}. Since then, 
  surplus management has become a central topic in actuarial science and insurance 
  finance, focusing on how to balance shareholder returns against corporate solvency. 
  Under modern regulatory frameworks such as Solvency II (\cite{EU2009SolvencyII}), 
  insurers are required not only to provide stable returns to shareholders but also 
  to maintain sufficient capital to satisfy stringent solvency requirements. This 
  dual obligation highlights the dynamic tension between profitability and regulatory 
  compliance. Reinsurance, investment, and dividend distribution constitute the 
  three primary decision variables available to insurance companies, and how to 
  optimally coordinate these instruments has remained a critical issue in both 
  academic research and practical applications.

  Within the tradition of discounted criteria, \cite{asmussen1997controlled} showed 
  in a diffusion risk model that the optimal 
  dividend strategy takes the form of a barrier policy, laying the 
  groundwork for subsequent studies. \cite{gerber2006optimal}, in the 
  context of a compound Poisson model, introduced threshold strategies 
  with constrained dividend rates, thereby extending the scope of feasible 
  dividend policies. \cite{avanzi2009strategies} provided a systematic review of the 
  major streams and methods in dividend optimization, summarizing key 
  results and methodological developments under the discounted framework. 
  Collectively, these contributions establish a rich structural understanding 
  of dividend optimization under discounted objectives. However, because this 
  criterion emphasizes short-term discounted returns, the resulting “optimal” 
  strategies often fail to safeguard long-term financial stability or adequately 
  mitigate bankruptcy risk.

  To address this limitation, researchers began incorporating solvency 
  constraints into discounted optimization problems. \cite{he2008optimal} embedded 
  solvency requirements into proportional reinsurance models and demonstrated that 
  regulatory restrictions can substantially alter optimal boundaries and reinsurance 
  intensity. \cite{liang2011optimal} extended the analysis to a joint dividend–investment 
  framework, further revealing the significant impact of capital constraints on optimal 
  strategies. More recently, \cite{avanzi2024optimal} investigated reinsurance 
  design problems with solvency constraints defined by risk measures such as Value-at-Risk
  and Expected Shortfall. Using martingale methods, they derived optimal reinsurance 
  structures combining proportional and stop-loss contracts, thereby highlighting the 
  influence of regulatory risk measures on strategic design. At the institutional level, 
  the Solvency II Directive explicitly incorporates risk-based 
  capital requirements, reinforcing the integrated trade-off between profitability, risk, 
  and regulatory capital. Despite these advances, most studies remain anchored in the 
  discounted paradigm and focus primarily on single or dual control problems, leaving 
  systematic analysis of the joint reinsurance–investment–dividend problem relatively scarce.

  Compared with the discounted framework, the average criterion is more consistent 
  with the long-term nature of insurance liabilities and business operations. 
  As early as 1983, \cite{tapiero1983optimal} proposed an optimal investment–dividend 
  control model under regulatory constraints, adopting long-run average return as the objective. 
  Their study was among the first to integrate solvency requirements with the average 
  reward criterion, illustrating how regulatory boundaries shape optimal corporate 
  strategies. \cite{Li2011} discussed the long-run average return under a heavy-tailed 
  claim model. \cite{sennott1989average} established sufficient conditions for the existence of 
  average-optimal stationary policies in Markov decision processes with infinite state 
  spaces and unbounded costs, advancing the theoretical foundation of average-cost 
  control. In more general settings, \cite{feinberg2012average} provided sufficient 
  conditions for average optimality and the validity of the average cost optimality 
  equation/inequality under weakly continuous transition kernels. On the computational 
  side, \cite{JinYangYin2018} demonstrated the effectiveness of controlled Markov 
  chain approximations in solving long-run dividend optimization problems. Complementary 
  contributions such as \cite{jin2013numerical} and the numerical framework 
  of \cite{kushner2001numerical} offer implementable approaches for high-dimensional stochastic 
  control problems.

  Meanwhile, real-world financial markets exhibit pronounced 
  regime-switching behavior. \cite{sotomayor2011classical} examined 
  dividend optimization under regime switching and showed how state transitions 
  alter the structure and robustness of optimal strategies. \cite{zhu2014dividend} studied optimal 
  constrained dividend rates in a regime-switching diffusion model. 
  \cite{tan2018optimal} integrated debt constraints into a hybrid regime-switching 
  jump–diffusion setting. \cite{JIANG20191} investigated optimal dividend strategies under 
  a regime-switching jump–diffusion model with completely monotone jump densities. 
  In the multi-control direction, \cite{bi2019optimal} analyzed optimal mean–variance 
  investment–reinsurance strategies in a regime-switching market subject to common 
  shocks. \cite{colaneri2021optimal,colaneri2025optimal} developed models under 
  forward preferences to characterize investment and proportional reinsurance decisions 
  in regime-switching markets. \cite{yan2020optimal} introduced Value-at-Risk 
  constraints into investment–reinsurance optimization, revealing the nonlinear 
  interactions between risk measures and control structures. \cite{zhou2024stochastic} further 
  proposed a stochastic hybrid optimal control framework with a recursive cost functional
  in regime-switching environments, broadening the scope of applicable methodologies. 
  In addition, \cite{YinZhangBadowski2003} established connections between regime-switching 
  diffusions and Markov chain limits, laying a mathematical foundation for subsequent 
  modeling and approximation techniques.

  This paper investigates the optimal reinsurance, investment, 
  and dividend strategies for insurance companies under solvency constraints
  to maximize long-run average reward. The surplus process 
  is modulated by a Markov regime-switching. Unlike the framework of 
  \cite{JinYangYin2018}, we assume that the investment process involves 
  risk assets and risk-free assets, and that the insurer 
  can only make investments and dividend payments when the surplus level 
  exceeds the regulatory threshold level. Given the presence of 
  multiple control variables and regime-switching in this system, 
  obtaining an analytical solution via the Hamilton-Jacobi-Bellman (HJB) equation is 
  virtually impossible. Therefore, we employ a Markov chain approximation method (MCAM) to transform the original stochastic control problem into a Markov decision process (MDP) with a discrete state space. The resulting MDP is then solved numerically using a neural network-based approximate dynamic programming approach.

  The rest of the study is organized as follows. Section 2 introduces 
  the model formulation under Markov regime-switching. Section 3 
  establishes the existence and uniqueness of the invariant measure, followed by the derivation of the dynamic programming 
  equation in Section 4. Section 5 presents the Markov chain 
  approximation method and the associated numerical algorithms, 
  while Section 6 discusses the convergence properties of the proposed 
  method. Section 7 presents a numerical example and conducts extended 
  discussions.

\section{Formulation}\label{Formulation}

  We work with a complete filtered probability space $(\Omega,\mathcal{F},\{\mathcal{F}_t\},\mathbb{P})$, 
  where $\{\mathcal{F}_t\}$ is a right continuous, $\mathbb{P}$-complete filtration. 
  Before introducing the risk model, we first build a Markov switching model to simulate 
  market changes. We use a continuous-time irreducible Markov chain $\alpha(t)$ on the finite 
  state space $\mathcal{M}=\{0,1,\cdots,m_0-1\}$ with generator $Q=(q_{ij})\in\mathbb{R}^{m_0\times m_0}$ and 
  transition probabilities satisfy 
  \begin{equation}\label{regime-switching}
    \begin{aligned}
      \mathbb{P}\{\alpha(t+\Delta)=j \vert\alpha(t)=i\}=
        \begin{cases}
          q_{ij}\Delta +o(\Delta),&j\neq i;\\
          1+q_{ii}\Delta+o(\Delta),&j=i,
        \end{cases}
    \end{aligned}
  \end{equation}
  and $q_{ij}\ge 0$ for $i,j\in \mathcal{M}$ and $j\neq i$ and $q_{ii}=-\sum_{j\neq i}q_{ij}<0$ for each 
  $i\in\mathcal{M}$. Generator $Q$ is assumed to be irreducible. 

  We consider an insurance company with insurance risk modeled by a classical risk process 
  \begin{align}
      X(t)=x+ct-C(t),
  \end{align}
  where $x$ is the initial surplus, $C(t)$ is a compound Poisson process approximation with intensity $\lambda$ 
  and claim size distribution $F$ regulated by Markov chain $\alpha(t)$ as follows:
  \begin{align}
      \mathrm{d}C(t)=\mu_C(\alpha(t))\mathrm{d}t-\sigma_C(\alpha(t))\mathrm{d}W_1(t).
  \end{align}
  When $\alpha(t)=i$, We assume that $\mu_C(i)=\lambda_iE(Y)$, 
  $\sigma_C(i)=\sqrt{\lambda_iE(Y^2)}$, where $E(Y)$ is the finite mean claim size. 
  Then, we suppose the premium rate $c$ has a positive security loading factor $\rho>0$ such that 
  $c(\alpha(t))=(1+\rho)\mu_C(\alpha(t))$.

  Under regulations, we let $K>0$ be the regulatory threshold, which is a given minimal cash requirement level. The insurer has to establish 
  a proportional reinsurance, investment, and dividend strategy. 
  The detailed policy is threefold. 
  \begin{enumerate}
      \item The proportional reinsurance strategy. Reinsurance allows the insurance firm to transfer a rate $1-a$ 
            of each claim it pays and to pay a reinsurance premium 
            $\Psi(a(t),\alpha(t))=(1+\beta)(1-a(t))\mu_C(\alpha(t))$ with the security loading factor $\beta>\rho$. 
            
      \item The investment strategy. All surplus above $K$ is converted into investment in proportion, of which the
            proportion of risk investment with  a return rate $r_1(\alpha(t))$ is $s$, and the rest is risk-free 
            investment with a return rate $r_2\le r_1$. Then, we have 
            \begin{equation}
                \begin{aligned}
                    \mathrm{d}S(t)&=[s(t)r_1(\alpha(t))+(1-s(t)-l(t))r_2](X(t)-K)^+\mathrm{d}t\\
                    &+s(t)\sigma_S(\alpha(t))(X(t)-K)^+\mathrm{d}W_2(t).
                \end{aligned}
            \end{equation}
      \item The dividend strategy. Let the cumulative dividend payments $D(\cdot)$ is a non-decreasing and non-negative $\mathcal{F}_t$-adapted process, 
            which represents the accumulated dividend payments paid up to time $t$. 
            The insurance company pays dividends according to the surplus exceeding $K$ at a limited rate $l$. Then, we have 
            \begin{equation}
                \begin{aligned}
                    \mathrm{d}D(t)=l(t)(X(t)-K)^+\mathrm{d}t.
                \end{aligned}
            \end{equation}
  \end{enumerate}
  Thus, the surplus process $X(t)$ of the insurance company is given by
  \begin{equation}\label{surplus process}
      \begin{aligned}
          \mathrm{d}X(t)&=c(\alpha(t))\mathrm{d}t-\Psi(a(t),\alpha(t))\mathrm{d}t-a(t)\mathrm{d}C(t)+\mathrm{d}S(t)-\mathrm{d}D(t),\\
          (X(0),\alpha(0))&=(x,i),
      \end{aligned}
  \end{equation}
  where:
  \begin{enumerate}
  \item $W_1(t)$ and $W_2(t)$ are independent standard Brownian motions.
  \item $\sigma_C(\cdot),\sigma_S(\cdot)>0$ and $\mu_C(\cdot)>0$ are bounded, i.e. $\sigma_C(\cdot), \sigma_S(\cdot)\le \sigma^2_{\max}$, $\mu_C(\cdot)\le \mu_{\max}$.
  \item We assume $a(t)$, $s(t)$ and $l(t)$ are $\mathcal{F}_t$-adapted processes, and an admissible strategy $u=(a,s,l)$ is 
        progressively measurable with respect to $\{W_1(s),W_2(s),\alpha(s),0\le s\le t\}$.  For ease of computation, the admissible strategy set $\mathcal{U}$ is defined as 
        \begin{align}\label{admissible strategy set}
          \mathcal{U}=\{u\in \mathbb{R}^3: a\in[M_a,1],s\in[0,M_s],l\in[M_l,1], s+l\le1\}.
        \end{align}
  \end{enumerate} 
Figure \ref{figure: sample path} illustrates the dynamics of $X(t)$ and $D(t)$ under a given control strategy.
\begin{figure}[h!]
    \centering
    \captionsetup{font={small, stretch=1.312}}
    \includegraphics[width=0.6\linewidth]{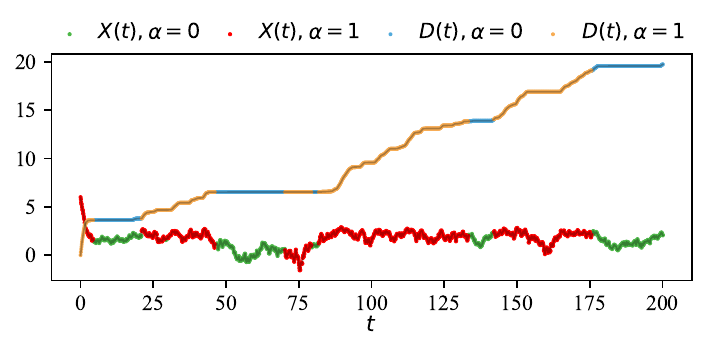}
    \caption{The surplus and dividend processes under a given control strategy.}
    \label{figure: sample path}
\end{figure}

  Then a Borel measurable function $u(x,i)=(a(x,i),s(x,i),l(x,i))$ is an admissible feedback strategy if \eqref{surplus process} has a unique 
  solution. For an arbitrary admissible feedback control $u(\cdot,\cdot)$, we assume that the insurer's objective function is 
  the average reward in the long term given by
  \begin{equation}\label{objective function}
      \begin{aligned}
          J(x,i,u)&=\liminf_{T\to\infty}\frac{1}{T}\mathbb{E}_{x,i}\left[\int^T_0c(\alpha(t))\mathrm{d}t
          +\int^T_0[s(X(t),\alpha(t))r_1(\alpha(t))\right.\\ &+(1-s(X(t),\alpha(t))
          -l(X(t),\alpha(t)))r_2](X(t)-K)^{+}\mathrm{d}t\\
          &\left.-\int^T_0\Psi(a(X(t),\alpha(t)),\alpha(t))\mathrm{d}t-\int^T_0a(t)\mu_C(\alpha(t))\mathrm{d}t\right],
      \end{aligned}
  \end{equation} 
  where $\mathbb{E}_{x,i}$ is the conditional expectation when $(X(0),\alpha(0))=(x,i)$. 
  Let $\mathbb{P}$ denote the conditional probability on $(X(0),\alpha(0))=(x,i)$. 
  If the controlled process $(X(t),\alpha(t))$ admits a unique invariant distribution, 
  then the long-run average reward does not depend on the initial state. 
  Hence, we may denote $J(x,i,u)$ simply by $\gamma(u)$. Define the optimal value as 
  \begin{align}\label{optimal_value}
      \bar{\gamma}:=\sup_{u\in\mathcal{U}}\gamma(u).
  \end{align}

  For the convenience of discussion, we record the drift term 
  of $X(t)$ as $b(X(t),\alpha(t))$ and the diffusion term as $\sigma(X(t),\alpha(t))$, that is
  \begin{equation}
      \begin{aligned}
          b(X(t),\alpha(t))&=c(\alpha(t))-\Psi(a(t),\alpha(t))-a(t)\mu_C(\alpha(t))
          +s(t)r_1(\alpha(t))(X(t)-K)^+\\
          &+(1-s(t)-l(t))r_2(X(t)-K)^+-l(t)(X(t)-K)^+,\\
          \sigma^2(X(t),\alpha(t))&=a^2(t)\sigma^2_C(\alpha(t))+[s(t)\sigma_S(\alpha(t))(X(t)-K)^+]^2.
      \end{aligned}
  \end{equation}
  Let  the drift $b(x,i,u)$ and diffusion coefficient $\sigma(x,i,u)$ satisfy the following conditions:
  \begin{enumerate}
      \item Linear growth: There exists a constant $C>0$ such that for all $x\in \mathbb{R}$, 
      $i \in \mathcal{M}$ and $u\in\mathcal{U}$, 
      \begin{align}
          \vert b(x,i,u)\vert +\vert\sigma(x,i,u)\vert \le C(1+\vert x\vert).
      \end{align}
      \item Local Lipschitz continuity: For every integer $N\ge 1$, there exists a positive 
      constant $M_N$ such that for all $t\in[0,T]$, $i\in\mathcal{M}$, $u\in\mathcal{U}$ and $x,y\in\mathbb{R}$ with
      $\vert x\vert \vee \vert y \vert \le M_N$, we have
      \begin{align}
          \vert b(x,i,u)-b(y,i,u)\vert \vee \vert \sigma(x,i,u)-\sigma(y,i,u)\vert \le M_N\vert x-y\vert.
      \end{align}
  \end{enumerate}
  Under the linear growth and local Lipschitz continuity assumptions on the coefficients 
  $b(\cdot,\cdot,\cdot)$ and $\sigma(\cdot,\cdot,\cdot)$, and given that $\alpha(t)$ is a c\'adl\'ag 
  finite-state Markov chain, it follows from the standard theory of regime-switching SDEs 
  in \cite{yin2010hybrid} 
  that the system admits a unique strong solution $(X(t),\alpha(t))$ to  
  \eqref{surplus process} for any initial data $(X(0),\alpha(0))=(x,i)$.

  For $i\in\mathcal{M}$, an arbitrary strategy $u\in\mathcal{U}$ and 
  a test function $V(\cdot,i)\in C^2(\mathbb{R})$, 
  we define the associated generator $\mathcal{L}^u$  acting on $V$ as follows: 
  \begin{align}
      \mathcal{L}^uV(x,i)=V_x(x,i)b(x,i,u)+\frac{1}{2}\sigma^2(x,i,u)V_{xx}(x,i)+QV(x,\cdot)(i),
  \end{align}
  where $V_x$ and $V_{xx}$ present the first and second order derivatives w.r.t. $x$, and 
  \begin{align}
      QV(x,\cdot)(i)=\sum_{j\neq i}q_{ij}(V(x,j)-V(x,i)).
  \end{align}
  If $\bar{\gamma}$ exists, according to the dynamic programming principle in \cite{fleming2006controlled}, 
  there exists a  viscosity solution $V$ to the following system of coupled HJB equations:
  \begin{align}\label{HJB}
      \bar{\gamma}=\max_{u\in\mathcal{U}}\{\mathcal{L}^uV(x,i)+f(x,i,u)\}, \forall i\in\mathcal{M}.
  \end{align}
  Here  a Borel measurable function $f(\cdot,\cdot,\cdot):\mathbb{R}\times \mathcal{M}\times\mathcal{U}\mapsto \mathbb{R}$ is defined as
    \begin{equation}\label{reward functionf(x)}
   \begin{aligned}
   f(x,i,u)&=c(i)+[sr_1(i)+(1-s-l)r_2](x-K)^+-\Psi(a,i)-a\mu_C(i).
   \end{aligned}
   \end{equation}
  Under suitable regularity assumptions, $V$ can be assumed to be sufficiently smooth 
  such that the operator $\mathcal{L}^u$ is well defined in the classical sense.
  
  In reality, the surplus of an insurance company can not be infinite. 
  For the convenience of calculation, we can select a large enough 
  interval $G=[-B,B]\subset \mathbb{R}$ as the state space of the surplus process $X(t)$.

\section{Invariant measure}
  
  In order to make the long-run average reward exist, for the process $Y(t)=(X(t),\alpha(t))$, 
  we consider a method to replace the instantaneous measures with invariant measures. 
  
  First, we need the following Assumption \ref{assumptionA}.   
  \begin{assumption}
      \label{assumptionA}
      $Y(t)$ is positive recurrent with respect to some bounded domain $E\times \{i\}$, 
      where $E\subset G\subset \mathbb{R}$, $i$ is fixed and $i\in \mathcal{M}$.
  \end{assumption}
  Assumption \ref{assumptionA} can be ensured in our framework, as suitable parameter specifications 
  guarantee that the process $Y(t)$ remains positive recurrent on the prescribed domain. 
  Based on Assumption \ref{assumptionA}, we introduce Lemma \ref{lemma1}.
  \begin{lemma}\label{lemma1}
      Assume Assumption \ref{assumptionA} holds. $Y(t)$ is the positive recurrent with 
      respect to $G\times \mathcal{M}$. 
  \end{lemma}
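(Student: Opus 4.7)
The plan is to prove the lemma by a direct monotonicity-of-hitting-times argument based on the set inclusion $E\times\{i\}\subseteq G\times\mathcal{M}$.

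First, I will fix the operational definition of positive recurrence in use. Following the standard framework for regime-switching diffusions in \cite{yin2010hybrid}, I take $Y(t)$ to be positive recurrent with respect to a bounded set $D$ if, for every initial state $(x,j)$ in the state space, the first hitting time
$$\tau_D = \inf\{t\ge 0 : Y(t)\in D\}$$
satisfies $\mathbb{E}_{x,j}[\tau_D]<\infty$. Under this convention, Assumption \ref{assumptionA} supplies $\mathbb{E}_{x,j}[\tau_{E\times\{i\}}]<\infty$ for every starting point $(x,j)$.

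Next, since $E\subset G$ and $\{i\}\subset\mathcal{M}$, the containment $E\times\{i\}\subseteq G\times\mathcal{M}$ is immediate, and any trajectory that has reached the smaller set has already entered the larger one. This yields the pathwise domination $\tau_{G\times\mathcal{M}}\le \tau_{E\times\{i\}}$ almost surely under $\mathbb{P}_{x,j}$. Taking expectations and invoking Assumption \ref{assumptionA} gives
$$\mathbb{E}_{x,j}[\tau_{G\times\mathcal{M}}]\le \mathbb{E}_{x,j}[\tau_{E\times\{i\}}]<\infty$$
uniformly in $(x,j)$, which is precisely positive recurrence of $Y(t)$ with respect to $G\times\mathcal{M}$.

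There is no substantive analytic obstacle in the argument; once the hitting-time definition is granted, the conclusion is essentially tautological. The content of the lemma lies in recasting Assumption \ref{assumptionA}, which provides only a localized recurrence witness on a single-regime seed set, as a global recurrence statement on the full hybrid state space. This global form, together with the irreducibility of the generator $Q$, is what permits the standard existence-and-uniqueness results for invariant measures of regime-switching diffusions to be invoked in the remainder of Section 3.
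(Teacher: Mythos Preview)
Your argument is correct for the statement as literally read: under the hitting-time definition of positive recurrence, the set inclusion $E\times\{i\}\subseteq G\times\mathcal{M}$ immediately gives $\tau_{G\times\mathcal{M}}\le\tau_{E\times\{i\}}$ pathwise, and Assumption~\ref{assumptionA} finishes the job. The paper's own proof is different only in that it does not argue at all---it simply invokes Theorem~3.12 of \cite{yin2010hybrid} in one line.

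The difference is worth noting because the two approaches deliver slightly different goods. Your monotonicity argument is entirely self-contained and proves exactly the stated conclusion. The cited theorem, by contrast, establishes the stronger fact that positive recurrence is independent of the reference domain: once it holds for \emph{some} bounded set, it holds for \emph{every} bounded set. This extra strength is what the paper actually exploits immediately afterward, when it asserts that the stopping times $\iota_{2k+2}$ (hitting $\partial G\times\{i\}$, which is \emph{not} a superset of $E\times\{i\}$) are finite almost surely. Your inclusion argument alone does not reach that conclusion, so if you intend your proof to serve the same downstream purpose, you would still need to appeal to the domain-independence result or supply a separate argument for hitting $\partial G\times\{i\}$.
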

  \begin{proof}
      The lemma is obviously true by applying Theorem 3.12 in \cite{yin2010hybrid}.
  \end{proof}

  We consider defining a sequence of stopping times $\{\iota_k\}, k=0,1,2,\cdots$. Let $\iota_0=0$, 
  $\iota_{2k+1}$ be the first time after $\iota_{2k}$ when $Y(t)$ reaches the boundary $\partial E\times \{i\}$, 
  and $\iota_{2k+2}$ be the first time after $\iota_{2k+1}$ when $Y(t)$ reaches the boundary 
  $\partial G\times \{i\}$. Thus, the sample path of $Y(t)$ can be divided into the cycles as 
  \begin{align}\label{cycles}
      [\iota_0,\iota_2),[\iota_2,\iota_4),\cdots,[\iota_{2k},\iota_{2k+2}),\cdots.
  \end{align}
  By Lemma \ref{lemma1}, $Y(t)$ is the positive recurrent with respect to $G\times \mathcal{M}$. 
  Thus, the stopping times $\{\iota_k\},k=0,1,2,\cdots$ are finite almost surely. 
  Because the process $Y(t)$ is positive recurrent, we assume the set without loss of generality 
  that $Y(0)=(X(0),\alpha(0))=(x,i)\in \partial G\times \{i\}$. 
  It follows from the strong Markov property of the process $Y(t)$ that the 
  sequence $Y_n=(X_n,i)=Y(\iota_{2n}),n=0,1,\cdots$ is 
  a Markov chain on $\partial G\times \{i\}$. Denote by $\mathcal{B}(\partial G)$ the 
  collection of Borel measurable sets on $\partial G$.  Note that the process $Y(t)$, 
  starting from $(x,i)$, may jump many times before it reaches the 
  set $(H,i)$ where $H\in \mathcal{B}(\partial G)$. 
  The one-step transition probabilities of this Markov chain $Y_n$ are defined as
  \begin{align}
      \tilde{p}(x,H)=\mathbb{P}(Y_1\in(H\times \{i\})\vert Y_0=(x,i)).
  \end{align}
  We can also define the $n$-step transition probability of the Markov chain as $\tilde{p}^{(n)}(x,H)$ 
  for any $n\ge 1$.

  Now we need to construct the stationary distribution of the process $Y(t)=(X(t),\alpha(t))$.

  \begin{theorem}
      \label{stationary}
      The positive recurrent process $Y(t)=(X(t),\alpha(t))$ has a unique stationary distribution 
      $\nu(\cdot,\cdot)=(\nu(\cdot,i):i\in\mathcal{M})$. Denote by $\zeta (\cdot,\cdot)$ the stationary density 
      associated with the stationary distribution $\nu(\cdot,\cdot)$. Then for any $(x,i)\in G\times \mathcal{M}$ and 
      a Borel measurable 
      function $f(\cdot,\cdot,\cdot):\mathbb{R}\times \mathcal{M}\times \mathcal{U}\mapsto \mathbb{R}$ defined in (\ref{reward functionf(x)}) such that
      \begin{equation}\label{reward function condition}
        \begin{aligned}
          \sum^{m_0-1}_{i=0}\int_{\mathbb{R}}\vert f(x,i,u)\vert \zeta(x,i)\mathrm{d}x<\infty,
        \end{aligned}
    \end{equation}
      we have 
      \begin{equation}\label{pb}
        \begin{aligned}
          \mathbb{P}_{x,i}&\left\{\lim_{T\to\infty}\frac{1}{T}\int^T_0 
          [c(\alpha(t))+[s(X(t),\alpha(t))r_1(\alpha(t))+\right.\\
          &(1-s(X(t),\alpha(t))-l(X(t),\alpha(t)))r_2](X(t)-K)^+\\
          &\left.-\Psi(a(X(t),\alpha(t)),\alpha(t))-a(X(t),\alpha(t))\mu_C(\alpha(t))]dt=\bar{\gamma}\right\}=1,
        \end{aligned}
      \end{equation}
      where
      \begin{equation}
        \begin{aligned}
          \bar{\gamma}&=\sum^{m_0-1}_{i=0}\int_{\mathbb{R}}f(x,i,u)\zeta(x,i)\mathrm{d}x\\
         &=\sum^{m_0-1}_{i=0}\int_{\mathbb{R}}\left[c(i)+[sr_1(i)+(1-s-l)r_2](x-K)^+
         -\Psi(a,i)-a\mu_C(i)\right]\zeta(x,i)\mathrm{d}x.
        \end{aligned}
      \end{equation}
  \end{theorem}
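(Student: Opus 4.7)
My plan is to build the stationary distribution from the regenerative structure already introduced through the stopping times $\{\iota_k\}$, extract an absolutely continuous density from the non-degeneracy of the diffusion, and then run a standard renewal/ergodic argument to identify the pathwise long-run average with the spatial average against $\zeta$.

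First I would work with the embedded chain $Y_n:=Y(\iota_{2n})$ on $\partial G\times\{i\}$. Since $Y(t)$ is positive recurrent on $G\times\mathcal{M}$ by Lemma~\ref{lemma1}, the cycle lengths $\tau_k:=\iota_{2k+2}-\iota_{2k}$ are a.s.\ finite with $\mathbb{E}_{x,i}[\tau_0]<\infty$, and the strong Markov property makes $\{Y_n\}$ a Harris recurrent chain with transition kernel $\tilde p$ possessing a unique invariant probability measure $\tilde\nu$ on $\partial G\times\{i\}$. I would then define, for every Borel set $A\subset G\times\mathcal{M}$,
\begin{equation*}
\nu(A)=\frac{\int_{\partial G}\mathbb{E}_{x,i}\!\left[\int_{\iota_0}^{\iota_2}\mathbf{1}_A(Y(t))\,dt\right]\tilde\nu(dx)}{\int_{\partial G}\mathbb{E}_{x,i}[\iota_2]\,\tilde\nu(dx)}.
\end{equation*}
Standard regenerative arguments in the spirit of \cite{yin2010hybrid} show that $\nu$ is a well defined probability measure invariant under the semigroup of $Y(t)$, and uniqueness follows from irreducibility of the embedded chain together with positive recurrence on every compact subset of $G\times\mathcal{M}$.

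Next I would verify that $\nu(\cdot,i)$ is absolutely continuous with respect to Lebesgue measure, producing the claimed density $\zeta(\cdot,i)$. This uses the non-degeneracy assumption $\sigma_C(\cdot),\sigma_S(\cdot)>0$: the transition kernels $P_t((x,i),\cdot)$ admit densities with respect to $dx$ on each regime $i$ by parabolic regularity for regime-switching SDEs, so invariance of $\nu$ under $P_t$ transfers absolute continuity to $\nu(\cdot,i)$.

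For the almost sure statement I would exploit the renewal structure. Choosing $N(T)$ with $T\in[\iota_{2N(T)},\iota_{2N(T)+2})$, I split
\begin{equation*}
\int_0^T f(X(t),\alpha(t),u)\,dt=\sum_{k=0}^{N(T)-1}\xi_k+R_T,\qquad \xi_k:=\int_{\iota_{2k}}^{\iota_{2k+2}} f(X(t),\alpha(t),u)\,dt.
\end{equation*}
By the strong Markov property the $\xi_k$ are i.i.d., and the integrability condition \eqref{reward function condition} combined with the cycle formula above yields $\mathbb{E}[\xi_0]=\mathbb{E}[\tau_0]\cdot\bar\gamma$. The classical renewal reward theorem — the SLLN for the i.i.d.\ $\xi_k$, together with $N(T)/T\to 1/\mathbb{E}[\tau_0]$ a.s.\ and a straightforward control of the boundary term $R_T$ using $\mathbb{E}[|\xi_0|]<\infty$ — then delivers \eqref{pb}, and the expression for $\bar\gamma$ follows by rewriting the time average as a spatial average against $\zeta$.

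The main obstacle I anticipate is the absolute continuity step in this regime-switching setting. The diffusion coefficient $\sigma^2(x,i,u)=a^2\sigma_C^2(i)+[s\sigma_S(i)(x-K)^+]^2$ is only uniformly elliptic above $K$; on $\{x\le K\}$ the $S$-contribution vanishes and one must rely on the term $a\sigma_C(i)$, which is bounded below by $M_a\min_i\sigma_C(i)>0$ by admissibility, to recover non-degeneracy of the $x$-component globally on $G$. Once this lower ellipticity bound is established, the density $\zeta(\cdot,i)$ exists throughout $G$ and the remainder of the argument is routine.
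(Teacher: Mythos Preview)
Your regenerative construction and renewal--ergodic argument are exactly the route the paper takes: build $\nu$ from the embedded chain $Y_n=Y(\iota_{2n})$ on $\partial G\times\{i\}$ via the cycle occupation formula, then split $\int_0^T f$ into cycle contributions and apply a law of large numbers together with $N(T)/T\to 1/\mathbb{E}[\tau_0]$. Two points, however, deserve correction or comment.

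First, the claim that the $\xi_k$ are i.i.d.\ under the strong Markov property is not correct with the cycles as defined. The endpoints $Y(\iota_{2k})$ live on $\partial G\times\{i\}$, which here is a two-point set, and the law of $\xi_k$ depends on which of these points the cycle starts from; the $\xi_k$ are therefore a stationary ergodic sequence (once the embedded chain is started in its invariant measure $\tilde\nu$), not an i.i.d.\ one. The paper handles this by assuming the initial distribution is $\tilde\nu$ and invoking the law of large numbers for the resulting stationary sequence; you should either do the same, or redefine the cycles as successive returns to a single fixed state so that a genuine regeneration and i.i.d.\ structure obtain.

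Second, there is a difference in what each side treats as a hypothesis. You take the integrability condition \eqref{reward function condition} as given, whereas the paper devotes the first half of its proof to verifying it for the specific reward $f$ in \eqref{reward functionf(x)}. The bounded terms $c(i)$, $\Psi(a,i)$, $a\mu_C(i)$ are immediate, but the $(x-K)^+$ term requires showing $\mathbb{E}_\nu[|X|]<\infty$; the paper does this via a Lyapunov argument with $V(x,i)=x^2$ and Dynkin's formula between successive returns to a bounded set. Conversely, you address the existence of a density $\zeta$ through uniform lower ellipticity (correctly noting that $a\sigma_C(i)\ge M_a\min_i\sigma_C(i)>0$ saves non-degeneracy below $K$), a point the paper simply assumes without comment. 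Your handling of the boundary term $R_T$ is also slightly coarser than the paper's, which splits $f=f_1-f_2$ into nonnegative parts to obtain a clean two-sided sandwich $\sum_{k\le N(T)}\eta_k\le\int_0^T f\le\sum_{k\le N(T)+1}\eta_k$.
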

  
  \begin{proof}
        
        Firstly, we need to show that \eqref{reward functionf(x)} satisfies the condition of \eqref{reward function condition}. 
        
        Because $\mu_C(i)\le \mu_{\max}$, $\vert c(i)\vert \le(1+\rho)\mu_{\max}$, for $\sum^{m_0-1}_{i=0}\int_{\mathbb{R}}\vert c(i)\vert \zeta(x,i)\mathrm{d}x$, 
        we have 
        \begin{align}\label{cint}
            \sum^{m_0}_{i=1}\int_{\mathbb{R}}\vert c(i)\vert \zeta(x,i)\mathrm{d}x\le (1+\rho)\mu_{\max}\sum^{m_0}_{i=1}\int_{\mathbb{R}}\zeta(x,i)\mathrm{d}x.
        \end{align}
        Since $\zeta(\cdot,\cdot)$ is the stationary density that satisfies $\sum^{m_0}_{i=1}\int_{\mathbb{R}}\zeta(x,i)\mathrm{d}x=1$, the 
        integral in \eqref{cint} is finite. 
        Similarly, 
        \begin{equation}
            \begin{aligned}
                \sum^{m_0}_{i=1}\int_{\mathbb{R}}\vert\Psi(a,i)\vert\zeta(x,i)\mathrm{d}x<\infty,\ \ 
                \sum^{m_0}_{i=1}\int_{\mathbb{R}}\vert a\mu_C(i)\vert\zeta(x,i)\mathrm{d}x<\infty.
            \end{aligned}
        \end{equation}
        
        Next, we define the time of first return to $E=\{x\in\mathbb{R}:\vert x\vert<N_0\}$ as 
        $\tau_{E}=\inf\{t>0:Y(t)=(X(t),\alpha(t))\in E\times \mathcal{M}\}$ and the $n$th return time 
        $\tau_E^{(n)}$ satisfying that $\tau_E^{(0)}=0$ and $\tau_E^{(n)}=\tau_E^{(n-1)}+(\tau_E^{(n)}-\tau_E^{(n-1)})$, 
        where $\Delta \tau_{E,n}=\tau_E^{(n)}-\tau_E^{(n-1)}$ is the interval time from the $(n-1)$th return to the 
        $n$th return. Because $Y(t)$ is positive recurrent, $\tau_E^{(n)}\to\infty$ and
        for any $n$, we have 
        \begin{align}\label{tauEconditon}
            E[\Delta \tau_{E,n}\vert Y(\tau_E^{(n-1)})\in E\times \mathcal{M}]<\infty.
        \end{align}
        Apply Dynkin's formula to the Lyapunov function $V(x,i)=x^2$ 
        for $t\in [\tau_E^{(n)},\tau_E^{(n+1)})$, we get 
        \begin{equation}\label{tauEV}
            \begin{aligned}
                \mathbb{E}[V(X(t),\alpha(t))\vert \mathcal{F}_{\tau_E^{(n)}}]=V(X(\tau_E^{(n)}),\alpha(\tau_E^{(n)}))
                +\mathbb{E}\left[\int^t_{\tau_E^{(n)}}\mathcal{L}^uV(X(s),\alpha(s))\mathrm{d}s\vert \mathcal{F}_{\tau_E^{(n)}}\right].
            \end{aligned}
        \end{equation}
        Since $Y(\tau_E^{(n)})\in E\times \mathcal{M}$, we 
        obtain $V(X(\tau_E^{(n)}),\alpha(\tau_E^{(n)}))=[X(\tau_E^{(n)})]^2\le N_0^2$.
        When $X(s)\in E^c$, we have $\mathcal{L}^uV(X(s),\alpha(s))\le-C$ where $C>0$.
        Thus, 
        \begin{equation}\label{LVneq}
            \begin{aligned}
                \int^t_{\tau_E^{(n)}}\mathcal{L}^uV(X(s),\alpha(s))\mathrm{d}s
                &\le\int^t_{\tau_E^{(n)}}\boldsymbol{1}_{\{X(s)\in E^c\}}\cdot(-C)\mathrm{d}s\\
                &+\int^t_{\tau_E^{(n)}}\boldsymbol{1}_{\{X(s)\in E\}}\cdot\mathcal{L}^uV(X(s),\alpha(s))\mathrm{d}s.
            \end{aligned}
        \end{equation}
        Because $\vert \mathcal{L}V(X(s),\alpha(s))\vert \le N_1$ 
        when $(X(s),\alpha(s))\in E\times\mathcal{M}$, where $N_1$ is a constant, the integral of the second term on 
        the right side of \eqref{LVneq} is finite. By taking conditional 
        expectation $\mathbb{E}[\cdot\vert\mathcal{F}_{\tau_E^{(n)}}]$ 
        for \eqref{LVneq} and scaling it down, 
        we can know that 
        \begin{equation}\label{tauEC}
            \begin{aligned}
                \mathbb{E}\left[\int^t_{\tau_E^{(n)}}\mathcal{L}^uV(X(s),\alpha(s))\mathrm{d}s\vert \mathcal{F}_{\tau_E^{(n)}}\right]
                &\le(N_1-C)\cdot\mathbb{E}[t-\tau_E^{(n)}\vert \mathcal{F}_{\tau_E^{(n)}}]\\
            \end{aligned}
        \end{equation}
        For any $t$, there exists $n$ such that $t\in[\tau_E^{(n)},\tau_E^{(n+1)})$. Combining
        \eqref{tauEV} and \eqref{tauEC} to obtain conditional expectation for any initial 
        state $(x,i)$, we have
        \begin{equation}
            \begin{aligned}
            &\mathbb{E}[V(X(t),\alpha(t))\vert X(0)=x,\alpha(0)=i]
            \le N_0^2
            +(N_1-C)\cdot\mathbb{E}[t-\tau_E^{(n)}\vert X(0)=x,\alpha(0)=i].
            \end{aligned}
        \end{equation}
        Because $\mathbb{E}[\tau_E^{(n+1)}-\tau_E^{(n)}\vert \mathcal{F}_{\tau_E^{(n)}}]<\infty$, we get 
        \begin{align}
            \sup_{t\ge0}\mathbb{E}[V(X(t),\alpha(t))\vert X(0)=x,\alpha(0)=i]<\infty.
        \end{align}
        Thus, 
        \begin{align}
            \mathbb{E}_\nu[X^2]=\lim_{t\to\infty}\mathbb{E}[X^2(t)\vert (X(0),\alpha(0))\sim \nu]<\infty.
        \end{align}
        Through the Cauchy - Schwarz inequality, we can get $\mathbb{E}_{\nu}\vert X\vert \le \sqrt{\mathbb{E}_{\nu}[X^2]}$. 
        Easily, we know that $(x-K)^+\le\vert x\vert +K$, so
        \begin{equation}
            \begin{aligned}
                \sum^{m_0-1}_{i=0}\int_{\mathbb{R}}\vert [sr_1(i)+(1-s-l)r_2](x-K)^+\vert\zeta(x,i)\mathrm{d}x
                &\le M_2\sum^{m_0-1}_{i=0}\int_{\mathbb{R}}(\vert x\vert +K)\zeta(x,i)\mathrm{d}x\\
                &=M_2(\mathbb{E}_{\nu}\vert X\vert +K)<\infty,
            \end{aligned}
        \end{equation}
        where $M_2=\max\{\vert sr_1+(1-s-l)r_2\vert\}$ is a bounded constant. 
        From the above, we know that \eqref{reward functionf(x)} satisfies the condition of \eqref{reward function condition}. 
        
        According to Lemma 4.1 in \cite{yin2010hybrid}, $Y_n(t)=(X_n,i)$ has a unique stationary distribution $\varphi(\cdot)$. 
        Recall the cycles defined in \eqref{cycles}. 
        For any $H\in \mathcal{B}(\mathbb{R})$,  
        $\varphi(H)=\lim_{n\to\infty}\tilde{p}^{(n)}(x,H)$. Denote by $\tau^{H\times\{i\}}$ the time 
        spent by the path of $Y(t)$ in the set $(H\times \{i\})$ during the first cycle. Set 
        \begin{align}\label{measure1}
            \hat{\nu}(H,i):=\int_{\partial G}\varphi(\mathrm{d}x)\mathbb{E}_x\tau^{H\times\{i\}}.
        \end{align}
        Using the proof of Theorem 4.3 in \cite{yin2010hybrid}, we obtain
        \begin{equation}\label{measure2}
            \begin{aligned}
                &\sum^{m_0-1}_{i=0}\int_\mathbb{R}[c(i)+[sr_1(i)+(1-s-l)r_2](x-K)^+-\Psi(a,i)-a\mu_C(i)]\hat{\nu}(\mathrm{d}x,i)\\
                &=\int_{\partial G}\varphi(\mathrm{d}x)\mathbb{E}_x\int^{\iota_2}_0
                [c(\alpha(t))+[s(X(t),\alpha(t))r_1(\alpha(t))+(1-s(X(t),\alpha(t))\\
                &-l(X(t),\alpha(t)))r_2](X(t)-K)^+-\Psi(a(X(t),\alpha(t)),\alpha(t))\\
                &-a(X(t),\alpha(t))\mu_C(\alpha(t))]\mathrm{d}t\\
                &=\sum^{m_0-1}_{i=0}\int_{\mathbb{R}}\mathbb{E}_{x,i}[c(\alpha(t))+[s(X(t),\alpha(t))r_1(\alpha(t))+(1-s(X(t),\alpha(t))\\
                &-l(X(t),\alpha(t)))r_2](X(t)-K)^+-\Psi(a(X(t),\alpha(t)),\alpha(t))\\
                &-a(X(t),\alpha(t))\mu_C(\alpha(t))]\hat{\nu}(\mathrm{d}x,i).
            \end{aligned}
        \end{equation}
        Thus, the desired stationary distribution is defined by the normalized measure as 
        \begin{align}\label{stationarymeasure}
            \nu(H,i)=\frac{\hat{\nu}(H,i)}{\sum^{m_0-1}_{j=0}\hat{\nu}(\mathbb{R},j)},\forall i\in\mathcal{M}.
        \end{align}
        
        Next, we will verify \eqref{pb}. We know about the stationary distribution that any 
        initial distribution starting from any point $(x, i)$ is asymptotically equivalent to the stationary 
        distribution starting from the initial distribution. Then we only need to verify that the initial 
        distribution is the latter case. 
        
        If the initial distribution is the stationary distribution 
        of the Markov chain $Y_n$, for any $H\in\mathcal{B}(\partial G)$, we have 
        \begin{align}
            \mathbb{P}\{(X(0),\alpha(0))\in(H\times \{i\})\}=\varphi(H).
        \end{align}
        Set a sequence of random variables 
        \begin{equation}
        \begin{aligned}
            \eta_n&=\int_{\iota_{2n}}^{\iota_{2n+2}}[c(\alpha(t))+[s(X(t),\alpha(t))r_1(\alpha(t))+(1-s(X(t),\alpha(t))\\
            &-l(X(t),\alpha(t)))r_2](X(t)-K)^+-\Psi(a(X(t),\alpha(t)),\alpha(t))\\
            &-a(X(t),\alpha(t))\mu_C(\alpha(t))]\mathrm{d}t.
        \end{aligned}
        \end{equation}
        From \eqref{measure1} and \eqref{measure2}, we obtain 
        \begin{equation}\label{expectation}
            \begin{aligned}
            \mathbb{E}\eta_n=\sum^{m_0-1}_{i=0}\int_{\mathbb{R}}[c(i)+[sr_1(i)+(1-s-l)r_2](x-K)^+
            -\Psi(a,i)-a\mu_C(i)]\hat{\nu}(\mathrm{d}x,i),
            \end{aligned}
        \end{equation}
        for all $n=0,1,2,\cdots$. Let $\phi(T)$ denote the number of cycles completed up to 
        time $T$. That is, 
        \begin{align}
            \phi(T):=\max\left\{n\in\mathbb{N}:\sum^n_{k=1}(\iota_{2k}-\iota_{2k-2})\le T\right\}.
        \end{align} 
        Thus, $\int^T_0f(X(t),\alpha(t))\mathrm{d}t$ can be decomposed into 
        \begin{equation}\label{func}
            \begin{aligned}
            &\int^T_0[c(\alpha(t))+[s(X(t),\alpha(t))r_1(\alpha(t))+(1-s(X(t),\alpha(t))\\
            &-l(X(t),\alpha(t)))r_2](X(t)-K)^+-\Psi(a(X(t),\alpha(t)),\alpha(t))\\
            &-a(X(t),\alpha(t))\mu_C(\alpha(t))]\mathrm{d}t\\
            &=\sum^{\phi(T)}_{n=0}\eta_n+\int^T_{\iota_{2\phi(T)}}[c(\alpha(t))+[s(X(t),\alpha(t))r_1(\alpha(t))+(1-s(X(t),\alpha(t))\\
            &-l(X(t),\alpha(t)))r_2](X(t)-K)^+
            -\Psi(a(X(t),\alpha(t)),\alpha(t))\\
            &-a(X(t),\alpha(t))\mu_C(\alpha(t))]\mathrm{d}t.
            \end{aligned}
        \end{equation}
        Without loss of generality, we can assume $f(x,i)=f_1(x,i)-f_2(x,i)$, 
        where $f_1(x,i)\ge 0$, $f_2(x,i)\ge 0$. 
        Then we have 
        \begin{align}
            \eta_{n1}=\int_{\iota_{2n}}^{\iota_{2n+2}}f_1(X(t),\alpha(t))\mathrm{d}t, \eta_{n2}=\int^{\iota_{2n+2}}_{\iota_{2n}}f_2(X(t),\alpha(t))\mathrm{d}t.
        \end{align}
        Similarly, we have
        \begin{equation}
            \begin{aligned}
                \int^T_0f_1(X(t),\alpha(t))\mathrm{d}t&=\sum^{\phi(T)}_{n=0}\eta_{n1}
                +\int^T_{\iota_{2\phi(T)}}f_1(X(t),\alpha(t))\mathrm{d}t,\\ 
                \int^T_0f_2(X(t),\alpha(t))\mathrm{d}t&=\sum^{\phi(T)}_{n=0}\eta_{n2}
                +\int^T_{\iota_{2\phi(T)}}f_2(X(t),\alpha(t))\mathrm{d}t.
            \end{aligned}
        \end{equation}
        By $f_1(x,i), f_2(x,i)\ge 0$, we can easily get 
        \begin{equation}
            \begin{aligned}
                \sum^{\phi(T)}_{n=0}\eta_{n1}\le \int^T_0 f_1(X(t),\alpha(t))\mathrm{d}t\le \sum^{\phi(T)+1}_{n=0}\eta_{n1},
                \sum^{\phi(T)}_{n=0}\eta_{n2}\le \int^T_0 f_2(X(t),\alpha(t))\mathrm{d}t\le \sum^{\phi(T)+1}_{n=0}\eta_{n2}.
            \end{aligned}
        \end{equation}
        Thus, we obtain 
        \begin{equation}
            \begin{aligned}
            \sum^{\phi(T)}_{n=0}\eta_n&\le \int^T_0[c(\alpha(t))+[s(X(t),\alpha(t))r_1(\alpha(t))+(1-s(X(t),\alpha(t))\\
            &-l(X(t),\alpha(t)))r_2](X(t)-K)^+-\Psi(a(X(t),\alpha(t)),\alpha(t))\\
            &-a(X(t),\alpha(t))\mu_C(\alpha(t))]\mathrm{d}t\\
            &\le \sum^{\phi(T)+1}_{n=0}\eta_n.
            \end{aligned}
        \end{equation}
        Then
        \begin{equation}
            \begin{aligned}
            \frac{1}{\phi(T)}\sum^{\phi(T)}_{n=0}\eta_n&\le \frac{1}{\phi(T)}\int^T_0 [c(\alpha(t))+[s(X(t),\alpha(t))r_1(\alpha(t))+(1-s(X(t),\alpha(t))\\
            &-l(X(t),\alpha(t)))r_2](X(t)-K)
            -\Psi(a(X(t),\alpha(t)),\alpha(t))\\
            &-a(X(t),\alpha(t))\mu_C(\alpha(t))]\mathrm{d}t\\
            &\le \frac{1}{\phi(T)}\sum^{\phi(T)+1}_{n=0}\eta_n.
            \end{aligned}
        \end{equation}
        As $T\to \infty$, $\phi(T)\to \infty$. Combining with \eqref{expectation}, we have 
        \begin{equation}
            \begin{aligned}
            &\frac{1}{\phi(T)}\int^T_0 [c(\alpha(t))+[s(X(t),\alpha(t))r_1(\alpha(t))+(1-s(X(t),\alpha(t))\\
            &-l(X(t),\alpha(t)))r_2](X(t)-K)^+
            -\Psi(a(X(t),\alpha(t)),\alpha(t))\\
            &-a(X(t),\alpha(t))\mu_C(\alpha(t))]\mathrm{d}t\\
            &\to \sum^{m_0-1}_{i=0}\int_{\mathbb{R}}[c(i)+[sr_1(i)+(1-s-l)r_2](x-K)^+-\Psi(a,i)-a\mu_C(i)]\hat{\nu}(\mathrm{d}x,i).
            \end{aligned}
        \end{equation}
        In addition, the law of large numbers implies that 
        \begin{equation}\label{approx1}
            \mathbb{P}\left\{
            \begin{aligned}
            &\frac{1}{n}\sum^n_{k=0}\eta_k\to\sum^{m_0}_{i=1}\int_{\mathbb{R}}[c(i)+(sr_1(i)+(1-s-l)r_2)(x-K)^+\\
            &-\Psi(a,i)-a\mu_C(i)]\hat{\nu}(\mathrm{d}x,i), \text{ as } n\to \infty
            \end{aligned}
            \right\}=1.
        \end{equation}
        In particular, when $f(x,i)\equiv 1$, \eqref{approx1} reduces to 
        \begin{align}\label{f(x)1}
            \mathbb{P}\left\{\frac{\iota_{2n+2}}{n}\to \sum^{m_0-1}_{i=0}\hat{\nu}(\mathrm{d}x,i), \text{ as } n\to \infty\right\}=1.
        \end{align}
        Note that $\phi(T)/(\phi(T)+1)\to 1$ almost surely as $T\to \infty$. Thus, it follows from 
        \eqref{f(x)1} that as $T\to\infty$, 
        \begin{align}\label{neq1}
            \frac{\iota_{2\phi(T)}}{\iota_{2\phi(T)+2}}=\frac{\frac{\iota_{2\phi(T)}}{\phi(T)}}{\frac{\iota_{2\phi(T)+2}}{\phi(T)+1}}\frac{\phi(T)}{\phi(T)+1}\to 1 \text{ a.s.}
        \end{align} 
        Since $\iota_{2\phi(T)}\le T\le \iota_{2\phi(T)+2}$, we get 
        \begin{align}
            \frac{\iota_{2\phi(T)}}{\iota_{2\phi(T)+2}}\le \frac{\iota_{2\phi(T)}}{T}\le \frac{\iota_{2\phi(T)}}{\iota_{2\phi(T)}}=1.
        \end{align}
        Thus, we have from \eqref{neq1} that 
        \begin{align}\label{2T1}
            \frac{\iota_{2\phi(T)}}{T}\to 1 \text{ a.s. as }T\to\infty.
        \end{align}
        Therefore, \eqref{f(x)1} implies that 
        \begin{align}\label{phiT1}
            \mathbb{P}\left\{\frac{T}{\phi(T)}\to\sum^{m_0-1}_{i=0}\hat{\nu}(\mathrm{d}x,i), \text{ as } T\to\infty\right\}=1.
        \end{align}
        Next, using \eqref{approx1}, \eqref{2T1} and \eqref{phiT1}, we obtain
        \begin{equation}
            \begin{aligned}
                &\frac{1}{T}\int^T_0[c(\alpha(t))+[s(X(t),\alpha(t))r_1(\alpha(t))+(1-s(X(t),\alpha(t))\\
                &\ \ -l(X(t),\alpha(t)))r_2](X(t)-K)^+-\Psi(a(X(t),\alpha(t)),\alpha(t))
                -a(X(t),\alpha(t))\mu_C(\alpha(t))]\mathrm{d}t\\
                &=\frac{1}{\phi(T)}\times \int^T_0[c(\alpha(t))+[s(X(t),\alpha(t))r_1(\alpha(t))+(1-s(X(t),\alpha(t))\\
                &\ \ -l(X(t),\alpha(t)))r_2](X(t)-K)^+-\Psi(a(X(t),\alpha(t)),\alpha(t))
                -a(X(t),\alpha(t))\mu_C(\alpha(t))]\mathrm{d}t \times \frac{\phi(T)}{T}\\
                &\to\sum^{m_0-1}_{i=0}\int_{\mathbb{R}}[c(i)+[sr_1(i)+(1-s-l)r_2](x-K)^+-\Psi(a,i)
               -a\mu_C(i)]\hat{\nu}(\mathrm{d}x,i)\times \frac{1}{\sum^{m_0}_{i=1}\hat{\nu}(\mathrm{d}x,i)}\\
                &=\sum^{m_0-1}_{i=0}\int_{\mathbb{R}}[c(i)+[sr_1(i)+(1-s-l)r_2](x-K)^+-\Psi(a,i)-a\mu_C(i)]\nu(\mathrm{d}x,i) \text{ a.s.}
            \end{aligned}
        \end{equation}
        Hence, 
        \begin{equation}\label{pp}
            \mathbb{P}\left(
                \begin{aligned}
                &\lim_{T\to\infty}\frac{1}{T}\int^T_0[c(\alpha(t))+[s(X(t),\alpha(t))r_1(\alpha(t))+(1-s(X(t),\alpha(t))\\
                &-l(X(t),\alpha(t)))r_2](X(t)-K)^+-\Psi(a(X(t),\alpha(t)),\alpha(t))\\
                &-a(X(t),\alpha(t))\mu_C(\alpha(t))]\mathrm{d}t\\
                &=\sum^{m_0-1}_{i=0}\int_{\mathbb{R}}[c(i)+[sr_1(i)+(1-s-l)r_2](x-K)^+-\Psi(a,i)\\
                &-a\mu_C(i)]\nu(\mathrm{d}x,i)
            \end{aligned}
            \right)=1.
        \end{equation}
        Note that \eqref{pp} holds for any $(x,i)\in G\times\mathcal{M}$, 
        and $\zeta(\cdot,\cdot)$ is the stationary density associated with the stationary distribution 
        $\nu(\cdot,\cdot)$, then \eqref{pp} can be written as \eqref{pb}.
        
  \end{proof}
  
  \section{Dynamic programming equation}

      Now, we have the stationary distribution $\nu(\cdot,\cdot)$. However, 
      it is generally difficult to approximate the invariant measure. To 
      get the optimal ergodic control of long-run average reward, we will refer to 
      the dynamic programming equation in \eqref{HJB}. We consider a two-component 
      Markov chain to approximate the state process, where one component corresponds to the 
      discretized surplus state and the other to the regime. Then we will rewrite \eqref{HJB} as 
      a dynamic programming equation with a Markov chain with transition probabilities. 

      Before deriving the dynamic programming equation, a review of key Markov chain concepts is required.
      By using the ergodic theorem for Markov chains in \cite{bertsekas1987dynamic,kushner1971introduction}, we 
      can find an auxiliary function $W(x,i,u)$ such that the pair $(W(x,i,u),\gamma(u))$ satisfies 
      \begin{align}
          W(x,i,u)=\sum_{y,j}p((x,i),(y,j)\vert u)W(y,j,u)+f(x,i,u)-\gamma(u),
      \end{align}
      for each feedback control $u(\cdot)$, where $p((x,i),(y,j)\vert u)$ is the transition 
      probability from a state $(x,i)$ to another state $(y,j)$ under the control $u(\cdot)$. 
      Define $\bar{\gamma}=\max_u\gamma(u)$, where $u(\cdot)\in\mathcal{U}$. Then 
      there is an auxiliary function $V(x,i)$ such that the pair $(V(x,i),\bar{\gamma})$ satisfies 
      the dynamic programming equation: 
      \begin{align}\label{HJBV}
          V(x,i)=\max_{u\in\mathcal{U}}\left\{\sum_{y,j}p((x,i),(y,j)\vert u)V(y,j)+f(x,i,u)-\bar{\gamma}\right\}.
      \end{align}
      To keep $V(x,i)$ from blowing up, \eqref{HJBV} can be rewritten in a centred form as 
      \begin{align}
          V(x,i)=\max_{u\in\mathcal{U}}\left\{\sum_{y,j}p((x,i),(y,j)\vert u)\tilde{V}(y,j)+f(x,i,u)\right\},
      \end{align}
      where 
      \begin{align}
          \tilde{V}(y,j)=V(y,j)-V(x_0,i).
      \end{align}
      $x_0$ is determined such that $\bar{\gamma}=V(x_0,i)$.

      It is always necessary to assume a compact state space for numerical analysis. 
      For the convenience of calculation and considering the positive recurrence of the 
      surplus process and the objective function, it is very natural to set a sufficiently 
      large reflecting boundary value $B$. Thus, for the boundaries, $V(x,i)$ follows 
      \begin{align}
          V_x(x,i)=0.
      \end{align}

      \section{Numerical approximation}
      We want to design a numerical scheme to 
      approximate $\bar{\gamma}$ in \eqref{optimal_value}. We will make an approximating 
      a Markov chain in the state space, and give the discrete form of the dynamic programming 
      equation and the transition probability of the approximate Markov chain.

      \subsection{Approximating Markov chain}
      A locally consistent discrete-time Markov chain will be constructed to approximate the 
      controlled regime-switching diffusion system, which will be 
      locally consistent with \eqref{surplus process}. By using the method in \cite{kushner2001numerical} and \cite{JinYangYin2018}, our approximating Markov chain 
      will include two components: one component is to approximate the diffusive part, and the 
      other describes the market regimes.

     Assuming the step size  $h>0$, we define $S^{\prime}_h=\{x:x=kh,k=0,\pm 1,\pm 2,\cdots\}$ and $G_h=(-(B+h),B+h)$ such that $S_h=S^{\prime}_h\cap G_h$. Here 
       $B$ is a very large value representing the bound for 
      computation purpose, and $B$ can be an integer multiple of $h$ without 
      losing generality. Suppose $\{(\xi^h_n,\alpha^h_n),n<\infty\}$ is a controlled discrete time 
      Markov chain on $S_h\times \mathcal{M}$, and $p^h((x,i),(y,j)\vert u^h)$ is the 
      transition probability from one state $(x,i)$ to another state $(y,j)$ under the control 
      $u^h$. Next, we will define $p^h$ so that the evolution of the constructed Markov chain can 
      approximate the local behavior of the controlled regime-switching diffusion 
      process \eqref{surplus process}. 

      At a discrete time $n$, we can not only carry out regular control, such as reinsurance, 
      investment and dividend, but also reflect on the boundary. 
      Let $\Delta \xi^h_n=\xi^h_{n+1}-\xi^h_n$, then 
      \begin{equation}\label{Deltaxi}
          \begin{aligned}
              \Delta\xi^h_n&=\Delta\xi^h_n\boldsymbol{I}_{\{\text{regular control at }n\}}
              +\Delta\xi^h_n\boldsymbol{I}_{\{\text{reflection step on the left at }n\}}\\
              &+\Delta\xi^h_n\boldsymbol{I}_{\{\text{reflection step on the right at }n\}}.
          \end{aligned}
      \end{equation}
      Note that only one term in \eqref{Deltaxi} is non-zero. Suppose a sequence of 
      control actions is given by $\{I^h_n:n=0,1,\cdots\}$, 
      where $I^h_n=0,1,\text{ or } 2$ corresponds to selecting a regular control, 
      reflection at the left boundary, or reflection at the right boundary, respectively, at time $n$. 
      If $I^h_n=0$, then we denote the regular control for the chain at time $n$ 
      by the random variable $u^h_n\subset U$ with $U=[M_a,1]\times[0,M_s]\times[M_l,1]$.  
      Let $\tilde{\Delta}t^h(\cdot,\cdot,\cdot)>0$ be the interpolation interval on 
      $S_h\times \mathcal{M}\times U$ and 
      \begin{equation}
          \begin{aligned}
              \inf_{x,i,u}\tilde{\Delta}t^h(x,i,u)>0, \forall h>0,\ \ 
              \lim_{h\to 0}\sup_{x,i,u}\tilde{\Delta}t^h(x,i,u)\to 0.
          \end{aligned}
      \end{equation}
      If $I^h_n=1$, or $\xi^h_n=-(B+h)$, it is necessary to apply the reflection step on the 
      left boundary. Left reflection makes the state from $-(B+h)$ to $-B$. Define $\Delta z^h_n$ 
      as the left reflection step size at time $n$, then $\Delta \xi^h_n=\Delta z^h_n=h$. 
      Similarly, when $I^h_n=2$, or $\xi^h_n=B+h$, we apply the reflection step on the right 
      boundary. Let $\Delta g^h_n$ be the right reflection step size at time $n$, 
      then $\Delta \xi^h_n=-\Delta g^h_n=-h$. 
      
      Define $\mathbb{E}^{u,h,0}_{x,i,n}$, $\mathrm{Var}^{u,h,0}_{x,i,n}$ and $\mathbb{P}^{u,h,0}_{x,i,n}$ 
      as the conditional expectation, variance, and marginal probability 
      given $\{\xi^h_k,\alpha^h_k, u^h_k,I^h_k,k\le n, \xi^h_n=x,\alpha^h_n=i,I^h_n=0,u^h_n=(a,s,l)\}$, 
      respectively. The sequence $\{(\xi^h_n,\alpha^h_n)\}$ is said to be locally consistent, if it 
      satisfies 
      \begin{equation}
          \begin{aligned}
              \mathbb{E}^{u,h,0}_{x,i,n}[\Delta\xi^h_n]&=[c(i)-\Psi(a,i)-a\mu_C(i)+sr_1(i)(x-K)^+\\
              &\ \ +(1-s-l)r_2(x-K)^+-l(x-K)^+]\tilde{\Delta}t^h(x,i,u)
              +o(\tilde{\Delta}t^h(x,i,u))\\
              \mathrm{Var}^{u,h,0}_{x,i,n}[\Delta\xi^h_n]&=[a^2\sigma^2_C(i)+s^2\sigma^2_A(i)((x-K)^+)^2]\tilde{\Delta}t^h(x,i,u)
              +o(\tilde{\Delta}t^h(x,i,u))\\
              \mathbb{P}^{u,h,0}_{x,i,n}(\alpha^h_{n+1}=j)&=q_{ij}\tilde{\Delta}t^h(x,i,u)+o(\tilde{\Delta}t^h(x,i,u)), \forall j\neq i,\\
              \mathbb{P}^{u,h,0}_{x,i,n}(\alpha^h_{n+1}=i)&=1+q_{ii}\tilde{\Delta}t^h(x,i,u)+o(\tilde{\Delta}t^h(x,i,u)),\\
              \sup_{n,\omega\in \Omega}\vert \Delta \xi^h_n\vert &\to 0 \text{ as }h\to 0.
          \end{aligned}
      \end{equation}
      When $I^h_n=1$ or $2$, we let the reflections be instantaneous. That is, 
      for any $(x,i,u,\bar{i})\in S_h\times \mathcal{M}\times U\times\{0,1,2\}$, 
      the interpolation interval is 
      \begin{align}\label{interval}
          \Delta t^h(x,i,u,\bar{i})=\tilde{\Delta}t^h(x,i,u)\boldsymbol{I}_{\{\bar{i}=0\}}.
      \end{align}
      The control strategy $u^h=\{u^h_n,n<\infty\}$ for the chain is said to be admissible if the chain has the 
      Markov property under that strategy. In particular, the strategy $u^h$ is admissible if $u^h_n$ is  
      $\sigma\{(\xi^h_0,\alpha^h_0),\cdots,(\xi^h_n,\alpha^h_n),u^h_0,\cdots,u^h_{n-1}\}$-adapted 
      and for any $\mathcal{G}\in\mathcal{B}(S_h\times\mathcal{M})$, we have 
      \begin{equation}
        \begin{aligned}
          &\mathbb{P}\left\{(\xi^h_{n+1},\alpha^h_{n+1})\in\mathcal{G}\vert\sigma\{(\xi^h_k,\alpha^h_k),u^h_k,k\le n\}\right\}
          =p^h((\xi^h_n,\alpha^h_n),\mathcal{G}\vert u^h_n),\\
          &\mathbb{P}\left\{(\xi^h_{n+1},\alpha^h_{n+1})=(-B,i)\vert (\xi^h_n,\alpha^h_n)=(-(B+h),i),
          \sigma\{(\xi^h_k,\alpha^h_k),u^h_k,k\le n\}\right\}=1\\
          &\mathbb{P}\left\{(\xi^h_{n+1},\alpha^h_{n+1})=(B,i)\vert (\xi^h_n,\alpha^h_n)=(B+h,i),
          \sigma\{(\xi^h_k,\alpha^h_k),u^h_k,k\le n\}\right\}=1.
        \end{aligned}
    \end{equation}
        Let 
        \begin{equation}
            \begin{aligned}
                t^h_0=0,t^h_n=\sum^{n-1}_{k=0}\Delta t^h(\xi^h_k,\alpha^h_k,u^h_k,I^h_k),
                \Delta t^h_k=\Delta t^h(\xi^h_k,\alpha^h_k,u^h_k,I^h_k), n^h(t)=\max\{n:t^h_n\le t\}.
            \end{aligned}
        \end{equation}
        Then the piecewise constant interpolations $(\xi^h(\cdot),\alpha^h(\cdot))$, $u^h(\cdot)$, 
        $z^h(\cdot)$ and $g^h(\cdot)$ for $t\in[t^h_n,t^h_{n+1})$ are defined as 
        \begin{equation}\label{intervalmarkov}
            \begin{aligned}
                &\xi^h(t)=\xi^h_n,\alpha^h(t)=\alpha^h_n,u^h(t)=u^h_n=u(\xi^h_n,\alpha^h_n),\\
                &z^h(t)=\sum_{k\le n^h(t)}\Delta z^h_k\boldsymbol{I}_{\{I^h_k=1\}},g^h(t)=\sum_{k\le n^h(t)}\Delta g^h_k\boldsymbol{I}_{\{I^h_k=2\}}.
            \end{aligned}
        \end{equation}
        Let $(\xi^h_0,\alpha^h_0)=(x,i)\in S_h\times \mathcal{M}$ and $u^h$ be an 
        admissible strategy. Thus, the return function for the controlled Markov chain follows: 
        \begin{align}
            J^h_B(x,i,u)=\liminf_n\frac{\mathbb{E}_{x,i}\sum^{n-1}_{k=1}f(\xi^h_k,\alpha^h_k,u^h_k)\Delta t^h_k}{\mathbb{E}_{x,i}\sum^{n-1}_{k=1}\Delta t^h_k},
        \end{align}
        which is analogous to \eqref{objective function} regarding the definition of interpolation 
        intervals in \eqref{interval}. Since $J^h_B(x,i,u)$ does not depend on the initial 
        condition $(x,i)$, we write it as $\gamma^h(u)$. Similarly, we denote 
        \begin{align}\label{gammah}
            \bar{\gamma}^h=\sup_{u^h \text{admissible}}\gamma^h(u).
        \end{align}
      Note that we are considering feedback controls $u(\cdot)$ here. 
      Similarly to $\nu$ in \eqref{stationarymeasure}, let $\nu^h(u)=\nu^h(x,i,u),(x,i)\in S_h\times \mathcal{M}$ 
      denote the associate invariant measure in the approximating space. Thus, $\gamma^h(u)$ 
      can be rewritten as 
      \begin{equation}\label{gammarewrite}
          \begin{aligned}
              \gamma^h(u)&=\liminf_n\frac{\mathbb{E}_{x,i}\sum^{n-1}_{k=1}f(\xi^h_k,\alpha^h_k,u^h_k)\Delta t^h_k}{\mathbb{E}_{x,i}\sum_{k=1}^{n-1}\Delta t^h_k}\\
              &=\frac{\sum_{x,i}f(x,i,u)\Delta t^h(x,i,u(x,i),0)\nu^h(x,i,u)}{\sum_{x,i}\Delta t^h(x,i,u(x,i),0)\nu^h(x,i,u)}.
          \end{aligned}
      \end{equation}
      Since the time interval of the approximating Markov chain $\Delta t^h(x,i,u(x,i),0)$ 
      depends on $(x,i)$ and $u$, the invariant measure for the approximating Markov chain needs 
      to consider the time spent on each state of the interpolated process. Thus, we define a 
      new measure $\omega^h(u)=\omega^h(x,i,u),(x,i)\in S_h\times \mathcal{M}$ such that 
      \begin{align}
          \omega^h(x,i,u)=\frac{\Delta t^h(x,i,u(x,i),0)\nu^h(x,i,u)}{\sum_{x,i}\Delta t^h(x,i,u(x,i),0)\nu^h(x,i,u)}.
      \end{align}
      Hence, $\gamma^h(u)$ can be written in a simple form as 
      \begin{align}\label{gammare2}
          \gamma^h(u)=\sum_xf(x,i,u)\omega^h(x,i,u).
      \end{align}
      Define $\mathbb{E}^u_{\omega^h(u)}$ be the expectation for the stationary process 
      under control $u(\cdot,\cdot)$. In view of \eqref{gammarewrite}, \eqref{gammare2} can also be written 
      as 
      \begin{align}\label{gammaexp}
          \gamma^h(u)=\mathbb{E}^u_{\omega^h(u)}\int^1_0f(\xi^h(s),\alpha^h(s),u^h(\xi^h(s),\alpha^h(s)))\mathrm{d}s.
      \end{align}
      Actually, it is much more difficult to calculate the invariant measure $\omega^h(u)$. 
      Through the iteration method, the convergence rate of computing the value of $\gamma^h(u)$ is much faster than 
      that of computing the invariant measure $\omega^h(u)$. Thus, we focus on the 
      convergence of the state process and objective functions rather than invariant measures. 
      Next, we will show that $V^h(x,i)$ satisfies the dynamic programming equation as 
      \begin{equation}\label{Valuefunction}
          \begin{aligned}
              V^h(x,i)=
              \begin{cases}
                  &\max_{u}\left[\sum_{y,j}p^h((x,i),(y,j)\vert u)V^h(y,j)
                  +(f(x,i,u)\right.\\
                  &\left.-\gamma^h)\Delta t^h(x,i,u,0)\right],
                  \text{ for }(x,i) \in S_h\times \mathcal{M},\\
                  &\max_{u}\left[\sum_{y,j}p^h((x,i),(y,j)\vert u)V^h(y,j)\right],
                  \text{ for }(x,i)\in \partial S_h\times \mathcal{M}.
              \end{cases}
          \end{aligned}
      \end{equation}

      \subsection{Discretization}
      
      Define the approximation to the first and the second derivatives of $V(\cdot,i)$ by 
      the finite difference method in using the step size $h>0$ as 
      \begin{equation}
          \begin{aligned}
              &V(x,i)\to V^h(x,i)\\
              &V_x(x,i)\to\frac{V^h(x+h,i)-V^h(x,i)}{h} \text{ for } b(x,i,u)>0,\\
              &V_x(x,i)\to\frac{V^h(x,i)-V^h(x-h,i)}{h} \text{ for } b(x,i,u)<0,\\
              &V_{xx}(x,i)\to\frac{V^h(x+h,i)-2V^h(x,i)+V^h(x-h,i)}{h^2}.
          \end{aligned}
      \end{equation}
      It leads to, $\forall x\in S_h,i\in\mathcal{M}$,
      \begin{equation}\label{HJBVh}
          \max_{u\in U}\left\{
          \begin{aligned}
              &\frac{V^h(x+h,i)-V^h(x,i)}{h}[b(x,i,u)]^+
              -\frac{V^h(x,i)-V^h(x-h,i)}{h}[b(x,i,u)]^-\\
              &+\frac{1}{2}\sigma^2(x,i,u)\cdot\frac{V^h(x+h,i)-2V^h(x,i)+V^h(x-h,i)}{h^2}
              +\sum_{j}V^h(x,\cdot)q_{ij}\\
              &+f(x,i,u)-\bar{\gamma}
          \end{aligned}
          \right\}=0.
      \end{equation}
      For the reflecting boundaries, we set 
      \begin{align}\label{Vxboundary}
          V_x(x,i)\to\frac{V^h(x,i)-V^h(x-h,i)}{h}.
      \end{align}
      That is, the process will be reflected left into the domain of the $x$. 
      Comparing \eqref{HJBVh} and \eqref{Vxboundary} with \eqref{Valuefunction}, we 
      obtain the transition probabilities of $V^h(x,i)$ in the interior of the domain as follows:
      \begin{equation}\label{probabilities}
          \begin{aligned}
              &p^h((x,i),(x+h,i)\vert u)=\frac{\sigma^2(x,i,u)^2/2+h[b(x,i,u)]^+}{D},\\
              &p^h((x,i),(x-h,i)\vert u)=\frac{\sigma^2(x,i,u)^2/2+h[b(x,i,u)]^-}{D},\\
              &p^h((x,i),(x,j)\vert u)=\frac{q_{ij}h^2}{D}, \text{ for }i\neq j,\\
              &p^h(\cdot)=0, \text{ otherwise,}\\
              &\Delta t^h(x,i,u,0)=\frac{h^2}{D},
          \end{aligned}
      \end{equation}
      with 
      \begin{align}
          D=\sigma(x,i,u)^2+h\vert b(x,i,u)\vert-h^2q_{ii}
      \end{align}
      being well defined. We also have the transition probability of $V^h(x,i)$ on the 
      boundaries comparing with \eqref{Valuefunction} as
      \begin{equation}
          \begin{aligned}
              &p^h((x,i),(x+h,i)\vert u)=1,\text{ for }x=-(B+h),\\
              &p^h((x,i),(x-h,i)\vert u)=1,\text{ for }x=B+h.
          \end{aligned}
      \end{equation}

  \section{Convergence of numerical approximation}
      In this section, we investigate the asymptotic properties of the approximating Markov 
      chain using the weak convergence techniques developed in \cite{JinYangYin2018}.

      \subsection{Interpolation and rescaling}
      We have finished the piecewise constant interpolation and chosen an appropriate interpolation 
      interval level by the approximating Markov chain defined in the last section. 
      In view of \eqref{intervalmarkov}, the continuous-time interpolations $(\xi^h(\cdot),\alpha^h(\cdot))$, 
      $u^h(\cdot,\cdot)$, $z^h(\cdot)$, and $g^h(\cdot)$ are defined. Moreover, let $\mathcal{U}^h$ 
      be the collection of strategies, which is determined by a sequence of measurable 
      functions $F^h_n(\cdot)$. For $u^h_n\in\mathcal{U}^h$, 
      \begin{align}\label{Udefine}
          u^h_n=F^h_n(\xi^h_k,\alpha^h_k,k\le n;u^h_k,k\le n).
      \end{align}
      We define $\mathcal{D}^h_t$ as the smallest $\sigma$-algebra generated by $\left\{\xi^h(s),
      \alpha^h(s),u^h(s), g^h(s),z^h(s),s\le t\right\}$. $\mathcal{U}^h$ defined 
      by \eqref{Udefine} is equivalent 
      to the set of all piecewise constant admissible strategies with 
      respect to $\mathcal{D}^h_t$. 

      Combining the notations of the regular controls, interpolations, and reflection steps, we can obtain $\xi^h(t)$ from \eqref{Deltaxi} as follows.
      \begin{equation}\label{xih(t)}
          \begin{aligned}
              \xi^h(t)&=x+\sum^{n-1}_{k=0}\left[\mathbb{E}^h_k\Delta \xi^h_k+(\Delta\xi^h_k-\mathbb{E}^h_k\Delta\xi^h_k)\right]\\
              &=x+\sum^{n-1}_{k=0}b(\xi^h_k,\alpha^h_k,u^h_k)\Delta t^h(\xi^h_k,\alpha^h_k,u^h_k,0)
              +\sum^{n-1}_{k=0}(\Delta\xi^h_k-\mathbb{E}^h_k\Delta\xi^h_k)+\varepsilon^h(t)\\
              &=x+B^h(t)+M^h(t)+\varepsilon^h(t),
          \end{aligned}
      \end{equation}
      where 
        \begin{equation}
            \begin{aligned}
            B^h(t)=\sum^{n-1}_{k=0}b(\xi^h_k,\alpha^h_k,u^h_k)\Delta t^h(\xi^h_k,\alpha^h_k,u^h_k,0),
            M^h(t)=\sum^{n-1}_{k=0}(\Delta\xi^h_k-\mathbb{E}^h_k\Delta\xi^h_k),
            \end{aligned}
        \end{equation}
      and $\varepsilon^h(t)$ is a negligible error satisfying 
      \begin{align}
          \lim_{h\to\infty}\sup_{0\le t\le T}E\vert\varepsilon^h(t)\vert^2\to 0,\text{ for any } 0<T<\infty.
      \end{align}
      Moreover, $M^h(t)$ is a martingale w.r.t. $\mathcal{D}^h_t$, and its discontinuity 
      goes to zero as $h\to\infty$. Now, we attempt to represent $M^h(t)$ similarly to the diffusion 
      term in \eqref{surplus process}. Define $W^h(\cdot)$ as 
      \begin{equation}\label{wdefine}
          \begin{aligned}
              W^h(t)&=\sum^{n-1}_{k=0}\frac{\Delta\xi^h_k-\mathbb{E}^h_k\Delta\xi^h_k}{\sigma(\xi^h_k,\alpha^h_k,u^h_k)}
              =\int^t_0\sigma^{-1}(\xi^h(s),\alpha^h(s),u^h(s))\mathrm{d}M^h(s).
          \end{aligned}
      \end{equation}
      \eqref{xih(t)} can be expressed  as 
      \begin{equation}\label{xihint}
          \begin{aligned}
              \xi^h(t)=x+\int^t_0b(\xi^h(s),\alpha^h(s),u^h(s))\mathrm{d}s
              +\int^t_0\sigma(\xi^h(s),\alpha^h(s),u^h(s))\mathrm{d}W^h(s)
              +\varepsilon^h(t).
          \end{aligned}
      \end{equation}
      We are going to do the rescaling next. The technique of time-rescaling is to 
      'stretch out' the control and state processes to make them smoother. Then we can 
      prove the tightness of $g^h(\cdot)$ and $z^h(\cdot)$. Define $\Delta \hat{t}^h_n$ by 
      \begin{equation}
          \begin{aligned}
              \Delta \hat{t}^h_n=
              \begin{cases}
                  \Delta t^h_n \text{ for a diffusion on step }n,\\
                  \vert\Delta z^h_n\vert=h \text{ for a left reflection on step }n,\\
                  \vert \Delta g^h_n\vert =h \text{ for a right reflection on step }n.
              \end{cases}
          \end{aligned}
      \end{equation}
      Define $\hat{T}^h(\cdot)$ by 
      \begin{align}
          \hat{T}^h(t)=\sum^{n-1}_{i=0}\Delta t^h_i=t^h_n, \text{ for }t\in[\hat{t}^h_n,\hat{t}^h_{n+1}).
      \end{align}
      When a regular control is exerted, the $\hat{T}^h(\cdot)$ will increase with unit slope. 
      In addition, we define the rescaled and interpolated process 
      $\hat{\xi}^h(t)=\xi^h(\hat{T}^h(t))$. $\hat{\alpha}^h(t)$, $\hat{u}^h(t)$, $\hat{g}^h(t)$ 
      and $\hat{z}^h(t)$ are defined similarly. The time scale is stretched out by $h$ at the left and right reflection 
      steps. Then the stretched process can be written as follows.
      \begin{equation}\label{xihrescaled}
          \begin{aligned}
              \hat{\xi}^h(t)=x+\int^t_0b(\hat{\xi}^h(s),\hat{\alpha}^h(s),\hat{u}^h(s))\mathrm{d}s
              +\int^t_0\sigma(\hat{\xi}^h(s),\hat{\alpha}^h(s),\hat{u}^h(s))\mathrm{d}W^h(s)
              +\varepsilon^h(t).
          \end{aligned}
      \end{equation}

      \subsection{Relaxed controls}
      In order to facilitate the proof of weak convergence, we introduce the relaxed control 
      representation. For more details, see \cite{kushner2001numerical} and \cite{JinYangYin2018}.

      Let $\mathcal{B}(U\times[0,\infty))$ be the $\sigma$-algebra of Borel subsets of 
      $U\times[0,\infty)$. An admissible relaxed control $m(\cdot)$ is a meausre on 
      $\mathcal{B}(U\times[0,\infty))$ such that $m(U\times[0,t])=1$ for each $t\ge0$. 
      Given a relaxed control $m(\cdot)$, there is an $m_t(\cdot)$ such that 
      $m(\mathrm{d}\phi\mathrm{d}t)=m_t(\mathrm{d}\phi)\mathrm{d}t$. Given a 
      relaxed control $m(\cdot)$ 
      of $u^h(\cdot)$, we define the derivative $m_t(\cdot)$ such that 
      \begin{align}
          m^h(H)=\int_{U\times[0,\infty)}\boldsymbol{I}_{\{(u^h,t)\in H\}}m_t(\mathrm{d}\phi)\mathrm{d}t
      \end{align}
      for all $H\in\mathcal{B}(U\times [0,\infty])$. For each $t$, $m_t(\cdot)$ is a 
      measure on $\mathcal{B}(U)$ satisfying $m_t(U)=1$. Then $m_t(\cdot)$ can 
      be defined as the left-hand derivative for $t>0$,
      \begin{align}
          m_t(O)=\lim_{\delta \to 0}\frac{m(O\times[t-\delta,t])}{\delta},\forall O\in\mathcal{B}(U).
      \end{align}
      The relaxed control representation $m^h(\cdot)$ of $u^h(\cdot)$ can be defined by 
      \begin{align}
          m^h_t(O)=\boldsymbol{I}_{\{u^h(t)\in O\}},\forall O\in\mathcal{B}(U).
      \end{align}
      Denote by $\mathcal{F}^h_t$ a filtration. $\mathcal{F}^h_t$ is the minimal $\sigma$-algebra 
      that measures 
      \begin{align}
          \{\xi^h(s),\alpha^h(\cdot),m^h_s(\cdot),W^h(s),z^h(s),g^h(s),s\le t\}.
      \end{align}
      Let $\varGamma^h$ be the set of admissible relaxed controls $m^h(\cdot)$ with respect 
      to $(\alpha^h(\cdot),W^h(\cdot))$ such that $m^h_t(\cdot)$ is a fixed probability 
      measure in the interval $[t^h_n,t^h_{n+1})$ given $\mathcal{F}^h_t$. Then $\varGamma^h$ is 
      a larger control space containing $\mathcal{U}^h$. Combining with the stretched out time scale, 
      we denote the rescaled relax control as $m_{\hat{T}^h(t)}(\mathrm{d}\phi)$. Define $M_t(O)$ 
      and $M^h_t(\mathrm{d}\phi)$ by 
        \begin{equation}
            \begin{aligned}
                M_t(O)\mathrm{d}t=\mathrm{d}W(t)\boldsymbol{I}_{u(t)\in O},\forall O\in\mathcal{B}(U),\ \ 
                M^h_t(\mathrm{d}\phi)\mathrm{d}t=\mathrm{d}W^h(t)\boldsymbol{I}_{u^h(t)\in\mathcal{U}}.
            \end{aligned}
        \end{equation}
      Likely, we set 
      \begin{align}
          \hat{M}^h_{\hat{T}^h(t)}(\mathrm{d}\phi)\mathrm{d}\hat{T}^h(t)=\mathrm{d}\hat{W}^h(\hat{T}^h(t))\boldsymbol{I}_{u^h(\hat{T}^h(t))\in\mathcal{U}}.
      \end{align}
      Taking into account the relaxed controls, we rewrite \eqref{xihint}, \eqref{xihrescaled}, 
      and \eqref{gammah} as
      \begin{equation}\label{xihMs}
          \begin{aligned}
              \xi^h(t)&=x+\int^t_0\int_{\mathcal{U}}b(\xi^h(s),\alpha^h(s),\phi)m^h_s(\mathrm{d}\phi)\mathrm{d}s\\
              &+\int^t_0\int_{\mathcal{U}}\sigma(\xi^h(s),\alpha^h(s),\phi)M^h_s(\mathrm{d}\phi)\mathrm{d}s+\varepsilon^h(t),
          \end{aligned}
      \end{equation}
      \begin{equation}\label{hatxi}
          \begin{aligned}
              \hat{\xi}^h(t)&=x+\int^t_0\int_{\mathcal{U}}b(\hat{\xi}^h(s),\hat{\alpha}^h(s),\phi)\hat{m}^h_{\hat{T}^h(s)}(\mathrm{d}\phi)\mathrm{d}\hat{T}^h(s)\\
              &+\int^t_0\int_{\mathcal{U}}\sigma(\hat{\xi}^h(s),\hat{\alpha}^h(s),\phi)\hat{M}^h_{\hat{T}^h(s)}(\mathrm{d}\phi)\mathrm{d}\hat{T}^h(s)
              +\varepsilon^h(t),
          \end{aligned}
      \end{equation}
      and 
      \begin{align}
          \bar{\gamma}^h=\sup_{m^h\in\varGamma^h}\gamma^h(m^h).
      \end{align}
      Next, we define the existence and uniqueness of the solution in the weak sense. 
      \begin{definition}
          By a weak solution of \eqref{xihMs}, we mean that there exists a probability 
          space $(\Omega,\mathcal{F},\{\mathcal{F}_t\},P)$, and the sequence of processes 
          $(x(\cdot),\alpha(\cdot), m(\cdot),W(\cdot))$ such that $W(\cdot)$ is a standard 
          $\mathcal{F}_t$-Wiener process, $\alpha(\cdot)$ is a continuous time Markov chain, 
          $m(\cdot)$ is admissible with respect to $x(\cdot)$ is $\mathcal{F}^t$-adapted, and 
          \eqref{xihMs} is satisfied. For an initial condition $(x,i)$, we say that the 
          probability law of the admissible process $(x(\cdot),\alpha(\cdot),m(\cdot),W(\cdot))$ 
          determines the probability law of solution $(x(\cdot),\alpha(\cdot),m(\cdot),W(\cdot))$ 
          to \eqref{xihMs} by the weak sense uniqueness, irrespective of the probability space.
      \end{definition}
      
      In addition, we have one more assumption.
      \begin{assumption}
          Let $u(\cdot)=(a(\cdot),s(\cdot),l(\cdot))$ be an admissible ordinary control with respect to $W(\cdot)$ and $\alpha(\cdot)$. 
          Assume that $u(\cdot)$ is piecewise constant and takes values in a finite set. For each 
          initial condition, there exists a solution, which is unique in the weak sense, to \eqref{xihMs} 
          where $m(\cdot)$ is the relaxed control representation of $u(\cdot)$.
      \end{assumption}

      \subsection{Convergence of a sequence of surplus processes}
      In this section, we deal with the convergence of the approximation sequence to the regime-switching 
      process and the surplus process. We will derive one lemma and three theorems. 
      \begin{lemma}\label{wapprox}
          Using the transition probabilities $\{p^h(\cdot)\}$ defined in \eqref{probabilities}, 
          the interpolated process of the constructed Markov chain $\{\hat{\alpha}^h(\cdot)\}$ converges 
          weakly to $\hat{\alpha}(\cdot)$, the Markov chain with generator $Q=(q_{ij})$.
      \end{lemma}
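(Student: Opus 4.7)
The plan is to follow the standard Markov chain approximation route used in \cite{kushner2001numerical,JinYangYin2018}: first establish tightness of $\{\hat{\alpha}^h(\cdot)\}$ in the Skorohod space $D([0,\infty);\mathcal{M})$, and then identify any weak limit through the martingale problem characterization of the generator $Q$. Since the regime component is coupled to $\hat{\xi}^h(\cdot)$ only through the control and not through the state directly (the transition probabilities $p^h((x,i),(x,j)\vert u)=q_{ij}h^2/D$ reduce, under the rescaled clock, to rates governed solely by $Q$), the argument for $\hat{\alpha}^h$ can be decoupled from the diffusive component.

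For tightness, observe that $\mathcal{M}$ is finite and hence compact, so only the modulus of continuity in the Skorohod sense needs to be controlled. The local consistency conditions give
\[
\mathbb{P}^{u,h,0}_{x,i,n}(\alpha^h_{n+1}\neq i) = -q_{ii}\tilde{\Delta}t^h(x,i,u) + o(\tilde{\Delta}t^h(x,i,u)),
\]
and, together with $\sup_{x,i,u}\tilde{\Delta}t^h(x,i,u)\to 0$ as $h\to 0$, imply that for any stopping time $\tau$ and any $\delta>0$,
\[
\mathbb{P}\bigl(\hat{\alpha}^h(\tau+\delta)\neq \hat{\alpha}^h(\tau)\mid \mathcal{F}^h_\tau\bigr)\le C\delta + o(\delta),
\]
with $C=\max_{i}\vert q_{ii}\vert$. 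This is Aldous's criterion and yields tightness of $\{\hat{\alpha}^h(\cdot)\}$.

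For identification of the limit, fix any $f:\mathcal{M}\to\mathbb{R}$ and consider
\[
N^{h,f}(t)=f(\hat{\alpha}^h(t)) - f(\hat{\alpha}^h(0)) - \int_0^t Qf(\hat{\alpha}^h(s))\,\mathrm{d}s.
\]
A one-step calculation using local consistency produces
\[
\mathbb{E}\bigl[f(\alpha^h_{n+1})-f(\alpha^h_n)\mid \mathcal{F}^h_n\bigr] = Qf(\alpha^h_n)\,\tilde{\Delta}t^h_n + o(\tilde{\Delta}t^h_n),
\]
and telescoping over regular steps shows that $N^{h,f}$ is an approximate $\mathcal{F}^h_t$-martingale whose error vanishes uniformly on compact sets as $h\to 0$. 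Passing to a weakly convergent subsequence $\hat{\alpha}^h\Rightarrow\hat{\alpha}$, invoking the Skorohod representation, and using boundedness of $f$ and $Qf$ on the finite set $\mathcal{M}$, the limit $N^f(t)=f(\hat{\alpha}(t))-f(\hat{\alpha}(0))-\int_0^t Qf(\hat{\alpha}(s))\,\mathrm{d}s$ is a genuine martingale. Since $\mathcal{M}$ is finite and $Q$ is bounded, the martingale problem for $Q$ has a unique solution, namely the continuous-time Markov chain with generator $Q$; uniqueness of the limit then upgrades subsequential convergence to weak convergence of the full sequence.

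The main obstacle I anticipate is bookkeeping around the time rescaling during reflection steps. Reflection steps contribute rescaled duration $h$ during which $\hat{\alpha}^h$ does not jump according to the regime dynamics, so the approximate martingale identity must be shown to be insensitive to insertion of these null-drift intervals. This amounts to proving that the total rescaled time spent in reflection steps on any compact interval $[0,T]$ vanishes as $h\to 0$, which follows from bounding the number of reflection visits by the occupation time of the regular-step clock together with $\sup_{x,i,u}\tilde{\Delta}t^h(x,i,u)\to 0$. Once this is in place, the tightness argument and the martingale characterization extend cleanly from regular-step time to the full rescaled time, completing the proof.
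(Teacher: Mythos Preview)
Your proposal is correct and follows the same two-step scheme as the paper: establish tightness of $\{\hat{\alpha}^h(\cdot)\}$ and then identify the limit as the Markov chain with generator $Q$. The paper's own proof is much terser: for tightness it invokes a Kurtz-type conditional second-moment bound $\bigl|\mathbb{E}[(\alpha^h(t+s)-\alpha^h(t))^2\mid\mathcal{F}^h_t]\bigr|\le \bar{\gamma}^h(s)$ with $\lim_{s\to0}\limsup_{h\to0}\mathbb{E}\bar{\gamma}^h(s)=0$ (citing \cite{YinZhangBadowski2003}) and then transfers the estimate to $\hat{\alpha}^h$ via the rescaling, whereas you use Aldous's criterion directly in terms of the jump probability; both are standard and equivalent here since $\mathcal{M}$ is finite. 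For the identification step the paper simply writes ``it can be shown,'' while you spell out the martingale-problem argument explicitly and also address the bookkeeping around reflection intervals---this makes your write-up more self-contained than the paper's, but the underlying route is the same.
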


      \begin{proof}
        Similar to Theorem 3.1 in \cite{YinZhangBadowski2003}, we can show that 
        \begin{equation}\label{alpha_h_convergence}
            \begin{aligned}
                \left\vert\mathbb{E}\left[(\alpha^h(t+s)-\alpha^h(t))^2\vert\mathcal{F}^h_t\right]\right\vert\le \bar{\gamma}^h(s),\ \ 
                \lim_{s\to0}\limsup_{h\to0}\mathbb{E}(\bar{\gamma}^h(s))=0,
            \end{aligned}
        \end{equation}
        where $\bar{\gamma}^h(s)\ge0$ is $\mathcal{F}^h_t$-measurable. On the other hand, 
        due to the definition of $\hat{\alpha}^h(\cdot)$, we have 
        \begin{equation}\label{alpha_hat_h}
            \begin{aligned}
                \left\vert\mathbb{E}\left[(\hat{\alpha}^h(t+s)-\hat{\alpha}^h(t))^2\right]\vert\mathcal{F}^h_t\right\vert
                \le\left\vert\mathbb{E}\left[(\alpha^h(t+s)-\alpha^h(t))^2\right]\vert\mathcal{F}^h_t\right\vert
                \le\bar{\gamma}^h(s).
            \end{aligned}
        \end{equation}
        Combining \eqref{alpha_h_convergence} and \eqref{alpha_hat_h}, we obtain that $\hat{\alpha}^h(\cdot)$ 
        is tight. Furthermore, it can be shown that the constructed Markov chain $\{\hat{\alpha}^h(\cdot)\}$ 
        converges weakly to $\hat{\alpha}(\cdot)$.
      \end{proof}

      \begin{theorem}\label{tight}
          Let the approximating chain $\{\xi^h_n,\alpha^h_n,n<\infty\}$ constructed with 
          transition probabilities defined in \eqref{probabilities} be locally consistent 
          with \eqref{surplus process}, $m^h(\cdot)$ be the relaxed control representation of 
          $\{u^h_n,n<\infty\}$, $(\xi^h(\cdot),\alpha^h(\cdot))$ be the continuous-time interpolation 
          defined in \eqref{intervalmarkov}, and $\{\hat{\xi}^h(\cdot),\hat{\alpha}^h(\cdot),\hat{m}^h(\cdot),
          \hat{W}^h(\cdot),\hat{z}^h(\cdot),\hat{g}^h(\cdot),\hat{T}^h(\cdot)\}$ be the 
          corresponding rescaled processes. Then $\{\hat{\xi}^h(\cdot),\hat{\alpha}^h(\cdot),\hat{m}^h(\cdot),
          \hat{W}^h(\cdot),\hat{z}^h(\cdot),\hat{g}^h(\cdot),\hat{T}^h(\cdot)\}$ is tight.
      \end{theorem}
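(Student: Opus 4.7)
The plan is to establish tightness of each component separately and then conclude tightness of the joint process. Since $U$ is a compact subset of $\mathbb{R}^3$ and the relaxed control values lie in the space of probability measures on $U$, which is compact in the Prohorov topology, tightness of $\{\hat{m}^h(\cdot)\}$ follows immediately from the compactness of the range space (see the relaxed control framework in \cite{kushner2001numerical}). Tightness of $\{\hat{\alpha}^h(\cdot)\}$ is already handled by Lemma \ref{wapprox}, where in fact weak convergence to $\hat{\alpha}(\cdot)$ has been shown.

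For the rescaled nondecreasing processes $\{\hat{T}^h(\cdot)\}$, $\{\hat{z}^h(\cdot)\}$ and $\{\hat{g}^h(\cdot)\}$, I would exploit the time-stretching definition of $\Delta \hat{t}^h_n$. By construction, on a diffusion step $\hat{T}^h$ increases with slope $1$ while $\hat{z}^h, \hat{g}^h$ are constant; on a reflection step the stretched time increment equals $h$, during which $\hat{z}^h$ or $\hat{g}^h$ increases by $h$. Consequently, all three processes are $1$-Lipschitz in $t$ (their total variation on $[0,T]$ is bounded by $T$), so a direct Arzel\`a--Ascoli argument, or equivalently the Kurtz criterion applied to nondecreasing processes with uniformly bounded rate, yields tightness.

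For $\{\hat{\xi}^h(\cdot)\}$ and $\{\hat{W}^h(\cdot)\}$, I would use the decomposition \eqref{xih(t)} together with the representations \eqref{wdefine} and \eqref{xihrescaled}. The drift term $B^h$ has bounded integrand by the linear growth condition on $b$ restricted to the bounded interval $G_h$, and the martingale $M^h$ has predictable quadratic variation controlled by $\sigma^2$, again bounded on $G_h$. Combining this with the negligibility of $\varepsilon^h$ in $L^2$, I would verify the Aldous--Kurtz tightness criterion: for any sequence of $\mathcal{D}^h_t$-stopping times $\tau^h \le T$ and any $\delta \to 0$, the increments $\hat{\xi}^h(\tau^h+\delta)-\hat{\xi}^h(\tau^h)$ and $\hat{W}^h(\tau^h+\delta)-\hat{W}^h(\tau^h)$ vanish in probability as $h\to 0$ and $\delta\to 0$. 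This follows from bounding second moments using the martingale isometry for $M^h$ and $W^h$ and the uniform bound on the drift.

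The main obstacle I expect is correctly handling the interaction between the time rescaling and the reflection steps when checking Aldous's criterion for $\hat{\xi}^h(\cdot)$: because $\hat{T}^h(\cdot)$ stays constant across reflection intervals while $\hat{\xi}^h(\cdot)$ still jumps by $\pm h$, one must separate diffusion and reflection contributions in the stopping-time bounds and use that the reflection jump size $h\to 0$. Once this is done, tightness of the joint vector follows from tightness of each marginal by a standard argument, since tightness is preserved under Cartesian products in the Skorokhod topology.
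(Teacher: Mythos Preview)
Your proposal is correct and follows essentially the same route as the paper's proof: Lemma \ref{wapprox} for $\hat{\alpha}^h$, compactness of the relaxed-control range space for $\hat{m}^h$, the Lipschitz property of the stretched-out processes for $\hat{T}^h,\hat{z}^h,\hat{g}^h$, and an Aldous-type second-moment/Chebyshev bound on the martingale increments $\Delta W^h_k$ (via local consistency) combined with the boundedness of $b$ on $G_h$ for $\hat{W}^h$ and $\hat{\xi}^h$. The paper proves tightness of $W^h(\cdot)$ first and then passes to $\hat{W}^h(\cdot)$, whereas you work directly with the rescaled versions, but this is a cosmetic difference and your treatment of the reflection contribution is in fact slightly more explicit than the paper's.
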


      \begin{proof}
        In view of Lemma \ref{wapprox}, $\{\hat{\alpha}^h(\cdot)\}$ is tight. Since its 
        range space is compact, the sequence $\{\hat{m}^h(\cdot)\}$ is tight.  
        Recall $W^h(\cdot)$ in \eqref{wdefine}, i.e.
        \begin{align}
            W^h(t)=\int^t_0\sigma^{-1}(\xi^h(s),\alpha^h(s),u^h(s))\mathrm{d}M^h(s).
        \end{align}
        Its increment over $\Delta t^h_k$ is 
        \begin{align}
            \Delta W^h_k=W^h(t_{k+1})-W^h(t_k)=\frac{\Delta M^h_k}{\sigma(\xi^h_k,\alpha^h_k,u^h_k)}.
        \end{align}
        The term $\Delta M^h_k= \Delta \xi^h_k-\mathbb{E}^h_k\Delta \xi^h_k$ is a martingale difference. 
        By local consistency of the discretization, 
        \begin{align}
            \mathbb{E}[(\Delta M^h)^2]=\sigma^2(\xi^h_k,\alpha^h_k,u^h_k)\Delta t^h_k+o(\Delta t^h).
        \end{align}
        Thus, $\mathbb{E}[(\Delta W^h_k)^2]=\Delta t^h_k+o(\Delta t^h)$. 
        Let $T<\infty$, and let $\tau_h$ be an $\mathcal{F}_t$-stopping time which 
        is not larger than $T$. The interval $[\tau_h,\tau_h+\delta]$ contains $N$ time steps, i.e. $\sum^{n+N-1}_{k=n}\Delta t^h_k
        =\delta +o(\delta)$. Then, we have 
        \begin{align}
            \mathbb{E}^{u^h_k}_{\tau_h}\left[(W^h(\tau_h+\delta)-W^h(\tau_h))^2\right]=\delta +\varepsilon_h,
        \end{align}
        where $\varepsilon_h\to 0$ uniformly in $\tau_h$. By Chebyshev's inequality, we have 
        \begin{align}
            \mathbb{P}(\vert W^h(\tau_h+\delta)-W^h(\tau_h)\vert >\epsilon)\le \frac{\mathbb{E}[(W^h(\tau_h+\delta)-W^h(\tau_h))^2]}{\epsilon^2}=\frac{\delta+\vert\varepsilon_h\vert}{\epsilon^2},
        \end{align}
        for all $\epsilon>0$. Taking $\limsup_{h\to 0}$ followed by $\lim_{\delta\to 0}$ yield the tightness 
        of $\{W^h(\cdot)\}$ by Aldous tightness criterion. 
        Similar to the argument of $\alpha^h(\cdot)$, the tightness of $\hat{W}^h(\cdot)$ is obtained. 
        Furthermore, following the definition of 'stretched out' timescale, 
        \begin{equation}
            \begin{aligned}
                \vert\hat{z}^h(\tau_h+\delta)-\hat{z}^h(\tau_h)\vert \le \vert\delta\vert + O(h),\ \ 
                \vert\hat{g}^h(\tau_h+\delta)-\hat{g}^h(\tau_h)\vert \le \vert\delta\vert + O(h).
            \end{aligned}
        \end{equation}
        Thus, $\{\hat{z}^h(\cdot),\hat{g}^h(\cdot)\}$ is tight. These results and the boundedness 
        of $b(\cdot)$ imply the tightness of $\{\xi^h(\cdot)\}$. Thus, $\{\hat{\xi}^h(\cdot), 
        \hat{\alpha}^h(\cdot),\hat{u}^h(\cdot),\hat{W}^h(\cdot),\hat{z}^h(\cdot),\hat{g}^h(\cdot),
        \hat{T}^h(\cdot)\}$ is tight.
        Since $\{\hat{\xi}^h(\cdot),\hat{\alpha}^h(\cdot),\hat{m}^h(\cdot),\hat{W}^h(\cdot),
        \hat{z}^h(\cdot),\hat{g}^h(\cdot),\hat{T}^h(\cdot)\}$ is tight, a weakly convergent 
        subsequence can be extracted. For simplicity, still index the subsequence by $h$.
        Denoted the limit by $\{\hat{x}(\cdot),\hat{\alpha}(\cdot),
        \hat{m}(\cdot),\hat{W}(\cdot),\hat{z}(\cdot),\hat{g}(\cdot),\hat{T}(\cdot)\}$, 
        whose paths are continuous w.p.1.

      \end{proof}

      \begin{theorem}\label{limitx}
          Under the conditions of Theorem \ref{tight}, let $\{\hat{x}(\cdot),\hat{\alpha}(\cdot),\hat{m}(\cdot),
          \hat{W}(\cdot),\hat{z}(\cdot),\\\hat{g}(\cdot),\hat{T}(\cdot)\}$ be the limit of 
          weakly convergent subsequence of  $\{\hat{\xi}^h(\cdot),\hat{\alpha}^h(\cdot),\hat{m}^h(\cdot),
          \hat{W}^h(\cdot),\\\hat{z}^h(\cdot),\hat{g}^h(\cdot),\hat{T}^h(\cdot)\}$. $W(\cdot)$ 
          is a standard $\mathcal{F}_t$-Wiener process and $m(\cdot)$ is admissible. Let 
          $\hat{\mathcal{F}}_t$ be the $\sigma$-algebra generated by $\{\hat{\xi}^h(\cdot),\hat{\alpha}^h(\cdot),
          \hat{m}^h(\cdot),\hat{W}^h(\cdot),\hat{z}^h(\cdot),\hat{g}^h(\cdot),\hat{T}^h(\cdot)\}$. 
          Then $\hat{W}(t)=W(\hat{T}(t))$ is an $\hat{\mathcal{F}}_t$-martingale with quadratic 
          variation $\hat{T}(t)$. The limit process follows:
          \begin{equation}\label{hatx}
              \begin{aligned}
                  \hat{x}(t)&=x+\int^t_0\int_{\mathcal{U}}b(\hat{x}(s),\hat{\alpha}(s),\phi)\hat{m}_{\hat{T}(s)}(\mathrm{d}\phi)\mathrm{d}\hat{T}(s)\\
                  &+\int^t_0\int_{\mathcal{U}}\sigma(\hat{x}(s),\hat{\alpha}(s),\phi)\hat{M}_{\hat{T}(s)}(\mathrm{d}\phi)\mathrm{d}\hat{T}(s).
              \end{aligned}
          \end{equation}
      \end{theorem}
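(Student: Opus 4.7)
The plan is to pass to the limit in the rescaled integral representation \eqref{hatxi} of $\hat\xi^h(\cdot)$ and identify the limit as a weak solution of \eqref{hatx}. By Theorem~\ref{tight} the joint process is tight, so by the Skorohod representation theorem I may assume the convergence is almost sure on a common probability space, with all continuous-path limits having continuous paths with probability one and $\hat T(\cdot)$ Lipschitz of constant $1$. I would then carry out three steps: identify $\hat W$ as a time-changed Wiener process, pass to the limit in the drift integral by ordinary continuity arguments, and pass to the limit in the diffusion integral via a martingale-problem formulation.

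First, recall from \eqref{wdefine} that $W^h(\cdot)$ is an $\mathcal F^h_t$-martingale, and local consistency gives $\mathbb E[(\Delta W^h_k)^2]=\Delta t^h_k+o(\Delta t^h_k)$. Under the stretched-out time scale $\hat T^h(\cdot)$, $\hat W^h(\cdot)$ inherits the (approximate) martingale property with quadratic variation approximately $\hat T^h(\cdot)$; passing to the limit $h\to 0$ using Burkholder--Davis--Gundy bounds on compact intervals to secure uniform integrability shows that $\hat W(\cdot)$ is an $\hat{\mathcal F}_t$-martingale with quadratic variation $\hat T(\cdot)$. The random time-change theorem for continuous martingales then yields a standard $\mathcal F_t$-Wiener process $W(\cdot)$ with $\hat W(t)=W(\hat T(t))$, and admissibility of $\hat m(\cdot)$ with respect to $(\hat W(\cdot),\hat\alpha(\cdot))$ follows by the standard conditional-independence argument, cf.\ Chapter~10 of \cite{kushner2001numerical}. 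Since $b(\cdot,\cdot,\phi)$ is continuous with linear growth and the error $\varepsilon^h(t)\to 0$ in $L^2$ uniformly on compacts, the mapping
\[
(x(\cdot),\alpha(\cdot),m(\cdot),T(\cdot))\mapsto \int_0^t\!\int_{\mathcal U} b(x(s),\alpha(s),\phi)\,m_{T(s)}(\mathrm d\phi)\,\mathrm dT(s)
\]
is continuous on the appropriate Skorohod path spaces, so the drift term in \eqref{hatxi} converges to the drift term in \eqref{hatx}.

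The main obstacle is passing to the limit in the diffusion term, since its integrator $\hat M^h_{\hat T^h(s)}(\mathrm d\phi)\,\mathrm d\hat T^h(s)$ couples $\hat W^h$ with the relaxed control $\hat m^h$, and the map from weakly convergent integrators to stochastic integrals is not continuous in general. The remedy is to avoid direct manipulation and work instead with the martingale-problem characterization. For each test function $F\in C_c^2(\mathbb R\times\mathcal M)$, local consistency together with Lemma~\ref{wapprox} implies that
\[
F(\hat\xi^h(t),\hat\alpha^h(t))-\int_0^t\!\int_{\mathcal U}\mathcal L^{\phi}F(\hat\xi^h(s),\hat\alpha^h(s))\,\hat m^h_{\hat T^h(s)}(\mathrm d\phi)\,\mathrm d\hat T^h(s)
\]
is an approximate $\hat{\mathcal F}^h_t$-martingale with error vanishing as $h\to 0$. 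Taking the limit along the Skorohod representation, using boundedness of the coefficients on the compact state space $G$ for uniform integrability, yields that the analogous process built from the limits is a true $\hat{\mathcal F}_t$-martingale. Combined with the weak-uniqueness assumption stated above (applied to the time change of a solution to \eqref{xihMs}), this characterizes $(\hat x(\cdot),\hat\alpha(\cdot))$ as the unique weak solution of the corresponding martingale problem, which is exactly the integral identity \eqref{hatx}. The reflection components $\hat z,\hat g$ drop out of \eqref{hatx} because they act only on the subintervals where $\hat T$ is flat (their contribution to the drift-diffusion decomposition vanishes on $\mathrm d\hat T$-null sets by the definition \eqref{interval} of $\Delta t^h$).
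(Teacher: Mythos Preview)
Your overall strategy---Skorohod representation, identifying $\hat W$ as a time-changed Brownian motion via the martingale property and quadratic variation, continuity for the drift integral---matches the paper. The paper, however, handles the diffusion term differently: rather than passing to a martingale-problem formulation, it introduces a piecewise-constant time discretization $\hat\xi^{h,\delta}(t)=\hat\xi^h(n\delta)$ for $t\in[n\delta,(n+1)\delta)$, rewrites \eqref{hatxi} with $\hat\xi^{h,\delta},\hat\alpha^{h,\delta}$ frozen in the integrands plus an error $\varepsilon^{h,\delta}$, and then lets $h\to0$ directly (the integrand is piecewise constant, so the stochastic integral is a finite sum against increments of $\hat W^h$, which converge), followed by $\delta\to0$. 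This is the standard Kushner--Dupuis device, and it delivers the representation \eqref{hatx} with the \emph{specific} limit $\hat W$ already in hand.

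Your martingale-problem route is also a standard alternative, but as written it has a gap for the precise statement being proved. Solving the martingale problem for $\mathcal L^\phi$ shows that $(\hat x,\hat\alpha,\hat m)$ is a weak solution of the controlled SDE, i.e.\ there exists \emph{some} Brownian motion on a possibly enlarged filtration making an integral identity hold. It does not by itself show that the driving noise is the particular process $\hat W$ obtained as the weak limit of $\hat W^h$, which is what \eqref{hatx} asserts through $\hat M_{\hat T(s)}$. Invoking weak uniqueness only yields equality in law, not the pathwise identity with the given $\hat W$. To close this you would either argue directly that $\int_0^t\int_{\mathcal U}\sigma(\hat\xi^{h,\delta},\hat\alpha^{h,\delta},\phi)\,\hat M^h_{\hat T^h(s)}(\mathrm d\phi)\,\mathrm d\hat T^h(s)$ converges to the corresponding integral against $\hat W$---which is exactly what the $\delta$-freezing trick accomplishes---or append a martingale representation step identifying the martingale part of $\hat x$ with a stochastic integral against $\hat W$ specifically.
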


      \begin{proof}
          For $\delta>0$, define the process $f(\cdot)$ by $f^{h,\delta}(t)=f^h(n\delta)$, 
          $t\in[n\delta,(n+1)\delta)$. Then, by the tightness of $\{\hat{\xi}^h(\cdot),\hat{\alpha}^h(\cdot)\}$, 
          \eqref{hatxi} can be rewritten as 
          \begin{equation}\label{hatxirw}
            \begin{aligned}
                \hat{\xi}^h(t)&=x+\int^t_0\int_{\mathcal{U}}b(\hat{\xi}^{h,\delta}(s),\hat{\alpha}^{h,\delta}(s),\phi)\hat{m}^h_{\hat{T}^h(s)}(\mathrm{d}\phi)\mathrm{d}\hat{T}^h(s)\\
                &+\int^t_0\int_{\mathcal{U}}\sigma(\hat{\xi}^{h,\delta}(s),\hat{\alpha}^{h,\delta}(s),\phi)\hat{M}^h_{\hat{T}^h(s)}(\mathrm{d}\phi)\mathrm{d}\hat{T}^h(s)
                +\varepsilon^{h,\delta}(t),
            \end{aligned}
          \end{equation}
          where
          \begin{align}
            \lim_{\delta\to0}\limsup_{h\to0}\mathbb{E}\vert\varepsilon^{h,\delta}(t)\vert=0.
          \end{align}
          If we can verify that $\hat{W}(\cdot)$ is an $\hat{\mathcal{F}}_t$-martingale, 
          then \eqref{hatx} could be obtained by taking limits in \eqref{hatxirw}. To 
          characterize $W(\cdot)$, let $t>0$, $\delta>0$, $p$, $\kappa$, and $\{t_k:k\le p\}$ 
          be given such that $t_k\le t\le t+\delta$ for all $k\le p$, $\varphi_j(\cdot)$ for 
          $j\le \kappa$ are real-valued and continuous functions on $U\times [0,\infty)$ 
          having compact supports for all $j\le \kappa$. Define 
          \begin{align}\label{varphim}
            (\varphi_j,\hat{m}^h)_t=\int^t_0\int_{\mathcal{U}}\varphi_j(\phi,s)\hat{m}^h_{\hat{T}(s)}(\mathrm{d}\phi)\mathrm{d}\hat{T}(s).
          \end{align}
          Let $\{\varGamma^\kappa_j:j\le\kappa\}$ be a sequence of non-decreasing partition of 
          $\varGamma$ such that $\Pi(\partial \varGamma^\kappa_j)=0$ for all $j$ and all $\kappa$, 
          where $\partial \varGamma^\kappa_j$ is the boundary of the set $\varGamma^\kappa_j$. 
          As $\kappa\to\infty$, let the diameters of the sets $\varGamma^\kappa_j$ go to zero. 
          Let $K(\cdot)$ be a real-valued and continuous function of its arguments with 
          compact support. In view of the definition of $\hat{W}(t)$, for each $i\in \mathcal{M}$, 
          we have 
            \begin{equation}
                \begin{aligned}
                &\mathbb{E}K(\hat{\xi}^h(t_k),\hat{\alpha}^h(t_k),\hat{W}^h(t_k),(\varphi_j,m^h)_{t_k},
                \hat{z}^h(t_k),\hat{g}^h(t_k),j\le\kappa,k\le p)\\
                &\times [\hat{W}^h(t+\delta)-\hat{W}^h(t)]=0.
                \end{aligned}
            \end{equation}
          By using the Skorokhod representation theorem and the dominated convergence 
          theorem, letting $h\to 0$, we obtain
          \begin{equation}\label{exhto0}
            \begin{aligned}
                &\mathbb{E}K(\hat{\xi}(t_k),\hat{\alpha}(t_k),\hat{W}(t_k),(\varphi_j,m)_{t_k},
                \hat{z}(t_k),\hat{g}(t_k),j\le\kappa,k\le p)\\
                &\times [\hat{W}(t+\delta)-\hat{W}(t)]=0.
            \end{aligned}
          \end{equation}
          Since $\hat{W}(\cdot)$ has continuous sample paths, \eqref{exhto0} implies 
          that $\hat{W}(\cdot)$ is a continuous $\mathcal{F}_t$-martingale. On the 
          other hand, since 
          \begin{equation}\label{seconddiff}
            \begin{aligned}
                \mathbb{E}\left[(\hat{W}^h(t+\delta))^2-(\hat{W}^h(t))^2\right]
                =\mathbb{E}\left[(\hat{W}^h(t+\delta)-\hat{W}^h(t))^2\right]
                =\hat{T}^h(t+\delta)-\hat{T}^h(t),
            \end{aligned}
          \end{equation}
          by using the Skorokhod representation and the dominant convergence 
          theorem together with \eqref{seconddiff}, letting $h\to 0$, we have 
          \begin{equation}\label{exhtW}
            \begin{aligned}
                &\mathbb{E}K(\hat{\xi}(t_k),\hat{\alpha}(t_k),\hat{W}(t_k),
                (\varphi_j,m)_{t_k},\hat{z}(t_k),\hat{g}(t_k),j\le\kappa,
                k\le p)\\
                &\times [\hat{W}^2(t+\delta)-\hat{W}^2(t)-(\hat{T}(t+\delta)-\hat{T}(t))]=0.
            \end{aligned}
          \end{equation}
          The quadratic variation of the martingale $\hat{W}(t)$ is 
          $\hat{T}(t)$, then $\hat{W}(\cdot)$ is an $\hat{\mathcal{F}}_t$-Wiener process.

          Letting $h\to 0$, by using the Skorokhod representation, we obtain 
          \begin{equation}
            \begin{aligned}
                \mathbb{E}\left\vert \int^t_0\int_{\mathcal{U}}b(\hat{\xi}^h(s),\hat{\alpha}^h(s),\phi)\hat{m}^h_{\hat{T}^h(s)}(\mathrm{d}\phi)\mathrm{d}\hat{T}^h(s)
                -\int^t_0\int_{\mathcal{U}}b(\hat{x}(s),\hat{\alpha}(s),\phi)\hat{m}^h_{\hat{T}(s)}(\mathrm{d}\phi)\mathrm{d}\hat{T}(s)\right\vert\to0
            \end{aligned}
          \end{equation}
          uniformly in $t$. On the other hand, $\{\hat{m}^h(\cdot)\}$ converges in the compact weak 
          topology, that is, for any bounded and continuous function $\varphi(\cdot)$ with 
          compact support, as $h\to 0$, 
          \begin{equation}
            \begin{aligned}
                \int^\infty_0\int_{\mathcal{U}}\varphi(\phi,s)\hat{m}^h_{\hat{T}^h(s)}(\mathrm{d}\phi)\mathrm{d}\hat{T}^h(s)\to\int^\infty_0\int_{\mathcal{U}}\varphi(\phi,s)\hat{m}_{\hat{T}(s)}(\mathrm{d}\phi)\mathrm{d}\hat{T}(s).
            \end{aligned}
          \end{equation}
          Again, the Skorokhod representation (with a slight abuse of notation) implies that, 
          as $h\to 0$,
          \begin{equation}
            \begin{aligned}
                \int^t_0\int_{\mathcal{U}}b(\hat{\xi}^h(s),\hat{\alpha}^h(s),\phi)\hat{m}^h_{\hat{T}^h(s)}(\mathrm{d}\phi)\mathrm{d}\hat{T}^h(s)
                \to\int^t_0\int_{\mathcal{U}}b(\hat{\xi}(s),\hat{\alpha}(s),\phi)\hat{m}_{\hat{T}(s)}(\mathrm{d}\phi)\mathrm{d}\hat{T}(s)
            \end{aligned}
          \end{equation}
          uniformly in $t$ on any bounded interval. 

          In view of \eqref{hatxirw}, since $\xi^{h,\delta}$ and $\alpha^{h,\delta}$ are 
          piecewise constant functions, as $h\to0$, we have
          \begin{equation}\label{sigmadelta}
            \begin{aligned}
                \int^t_0\int_{\mathcal{U}}\sigma(\hat{\xi}^{h,\delta}(s),\hat{\alpha}^{h,\delta}(s),\phi)\hat{M}_{\hat{T}^h(s)}(\mathrm{d}\phi)\mathrm{d}\hat{T}^h(s)
                \to\int^t_0\int_{\mathcal{U}}\sigma(\hat{\xi}^\delta(s),\hat{\alpha}^\delta(s),\phi)\hat{M}_{\hat{T}(s)}(\mathrm{d}\phi)\mathrm{T}(s).
            \end{aligned}
          \end{equation}
          Combining \eqref{varphim}--\eqref{sigmadelta}, we have 
          \begin{equation}
            \begin{aligned}
                \hat{x}(t)&=x+\int_0^t\int_{\mathcal{U}}b(\hat{x}(s),\hat{\alpha}(s),\phi)\hat{m}_{\hat{T}(s)}(\mathrm{d}\phi)\mathrm{d}\hat{T}(s)\\
               & +\int_0^t\int_{\mathcal{U}}\sigma(\hat{x}^\delta,\hat{\alpha}^\delta(s),\phi)\hat{M}_{\hat{T}(s)}(\mathrm{d}\phi)\mathrm{d}\hat{T}(s)+\varepsilon^\delta(t),
            \end{aligned}
          \end{equation}
          where $\lim_{\delta\to0}\mathbb{E}\vert\varepsilon^\delta(t)\vert=0$. Finally, taking 
          limits in the above equation as $\delta\to0$, \eqref{hatx} is obtained.
      \end{proof}
      
      \begin{theorem}\label{inverseJ}
          For $t<\infty$, the inverse can be defined as 
          \begin{align}
              \mathcal{T}(t)=\inf\{s:\hat{T}(s)>t\}.
          \end{align}
          Then $\mathcal{T}(t)$ is right continuous. $\mathcal{T}(t)\to\infty$ as $t\to\infty \mathrm{, w.p.1}$. 
          Define the rescaled process $\varphi(\cdot)$ by $\varphi(t)=\hat{\varphi}(\mathcal{T}(t))$ for any process 
          $\hat{\varphi}(\cdot)$. Then, $W(\cdot)$ is a standard $\mathcal{F}_t$-Wiener process and \eqref{surplus process} 
          holds.
      \end{theorem}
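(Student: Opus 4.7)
The plan is to use the fact that $\mathcal{T}(\cdot)$ is the right-continuous inverse of the non-decreasing process $\hat{T}(\cdot)$ constructed in Theorem \ref{limitx}, and then to apply a martingale time-change (a relaxed-control variant of Dambis--Dubins--Schwarz) that pushes the rescaled dynamics \eqref{hatx} back onto the original clock so as to recover \eqref{surplus process}.

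First, I would verify the elementary properties of $\mathcal{T}$. Right continuity is immediate from the general fact that the generalized inverse of a non-decreasing function, defined by $\inf\{s:\hat{T}(s)>t\}$, is always right-continuous and non-decreasing; here $\hat{T}$ is the continuous, non-decreasing weak limit of $\hat{T}^h(\cdot)$, so this standard argument applies. The divergence $\mathcal{T}(t)\to\infty$ reduces to showing $\hat{T}(s)\to\infty$ almost surely as $s\to\infty$. On the stretched clock, $\hat{T}$ increases with unit slope during regular diffusion steps and is flat on each reflection sub-interval of length $h$; by Lemma \ref{lemma1} and the local consistency of the chain used in Theorem \ref{tight}, the cumulative length of the diffusion steps diverges as the rescaled time tends to infinity, so $\hat{T}(s)\uparrow\infty$ almost surely.

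Next, I would carry out the time change itself. By Theorem \ref{limitx}, $\hat{W}(\cdot)$ is a continuous $\hat{\mathcal{F}}_t$-martingale with quadratic variation $\hat{T}(\cdot)$. Setting $W(t):=\hat{W}(\mathcal{T}(t))$ and $\mathcal{F}_t:=\hat{\mathcal{F}}_{\mathcal{T}(t)}$, an application of the optional stopping theorem yields that $W$ is a continuous $\mathcal{F}_t$-martingale with $\langle W\rangle_t=\hat{T}(\mathcal{T}(t))=t$; L\'evy's characterization then gives that $W(\cdot)$ is a standard Wiener process. Substituting $s=\mathcal{T}(u)$ inside \eqref{hatx} transforms each integral against $\mathrm{d}\hat{T}(s)$ into an ordinary integral against $\mathrm{d}u$, and with the definitions $x(u):=\hat{x}(\mathcal{T}(u))$, $\alpha(u):=\hat{\alpha}(\mathcal{T}(u))$, $m(u):=\hat{m}_{\mathcal{T}(u)}$, $z(u):=\hat{z}(\mathcal{T}(u))$, and $g(u):=\hat{g}(\mathcal{T}(u))$, the rescaled equation collapses back to the controlled regime-switching SDE with reflection given in \eqref{surplus process}.

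The main technical obstacle I expect is handling the flat sections of $\hat{T}$, which correspond precisely to the reflection sub-intervals. One must argue (i) that taking the right-continuous inverse collapses each such sub-interval to a single instant on the original clock, so that $\mathcal{T}$ is indeed the correct notion of inverse to use, and (ii) that the drift and diffusion integrals contribute nothing on these sections because $\mathrm{d}\hat{T}\equiv 0$ there, while the boundary reflection directions are still captured correctly through the limits of $\hat{z}$ and $\hat{g}$. These subtleties are treated in the relaxed-control and reflection framework of \cite{kushner2001numerical}; combined with the joint weak convergence from Theorem \ref{tight} and the Skorokhod representation, they provide all the machinery needed to finish the proof.
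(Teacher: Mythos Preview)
Your proposal is correct and reaches the same conclusion as the paper, but the mechanism you use to establish that $W(\cdot)$ is a Wiener process differs from the paper's. You invoke a Dambis--Dubins--Schwarz / L\'evy-characterization argument: since $\hat{W}$ is a continuous martingale with quadratic variation $\hat{T}$, the time-change $W(t)=\hat{W}(\mathcal{T}(t))$ has quadratic variation $\hat{T}(\mathcal{T}(t))=t$ (using continuity of $\hat{T}$), hence is Brownian motion. The paper instead re-runs the test-function argument of Theorem~\ref{limitx}: for arbitrary bounded continuous $K$ evaluated at finitely many times $t_k\le t$, it verifies
\[
\mathbb{E}\,K(\cdots)\bigl[W(t+\delta)-W(t)\bigr]=0,\qquad
\mathbb{E}\,K(\cdots)\bigl[W^2(t+\delta)-W^2(t)-\delta\bigr]=0,
\]
which identifies $W$ as an $\mathcal{F}_t$-Wiener process directly in the filtration generated by all the limit objects. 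Your route is shorter and more self-contained; the paper's route stays within the Kushner--Dupuis weak-convergence framework already set up in Theorem~\ref{limitx} and makes the adaptedness of $W$ to the full filtration (including $m,z,g$) explicit without a separate argument. Your handling of the flat sections of $\hat{T}$ and the change of variables $\mathrm{d}\hat{T}(s)\mapsto\mathrm{d}u$ matches what the paper does in its one-line ``a rescaling of \eqref{hatx} yields'' step. One small point: the limit equation in the paper, like \eqref{surplus process} itself, is written without explicit reflection terms, so your phrase ``with reflection'' is slightly more than what is actually claimed there.
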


      \begin{proof}
        Since $\hat{T}(t)\to\infty$ w.p.1 as $t\to\infty$, $\mathcal{T}(t)$ exists for 
        all $t$ and $\mathcal{T}(t)\to\infty$ as $t\to\infty$ w.p.1. Similar to \eqref{exhto0} 
        and \eqref{exhtW}, for each $i\in\mathcal{M}$,
        \begin{equation}
            \begin{aligned}
                &\mathbb{E}K(\xi^h(t_k),\alpha^h(t_k),W^h(t_k),(\varphi_j,m^h)_{t_k},z^h(t_k),g^h(t_k),j\le\kappa,k\le p)
                \times [W(t+\delta)-W(t)]=0,\\
                &\mathbb{E}K(\xi^h(t_k),\alpha^h(t_k),W^h(t_k),(\varphi_j,m^h)_{t_k},z^h(t_k),g^h(t_k),j\le\kappa,k\le p)\\
                &\times [W^2(t)+\delta-W^2(t)-(\mathcal{T}(t+\delta)-\mathcal{T}(t))]=0.
            \end{aligned}
        \end{equation}
        Thus, we can verify that $W(\cdot)$ is an $\mathcal{F}_t$-Wiener process. A rescaling of
        \eqref{hatx} yields 
        \begin{equation}
            \begin{aligned}
                x(t) = x +\int^t_0\int_{\mathcal{U}}b(x(s),\alpha(s),\phi)m_s(\mathrm{d}\phi)\mathrm{d}s 
                +\int^t_0\int_{\mathcal{U}}\sigma(x(s),\alpha(s),\phi)M_s(\mathrm{d}\phi)\mathrm{d}s.
            \end{aligned}
        \end{equation}
        In other words, \eqref{surplus process} holds.

      \end{proof}
      
      \subsection{Convergence of the optimal value}
      To show the convergence of the optimal value of the objective function, we 
      find a comparison $\varepsilon$-optimal value next.

      \begin{lemma}\label{epsilonlemma}
          For each $\varepsilon>0$, there exists a continuous feedback control $u^\varepsilon(\cdot)$ 
          that is $\varepsilon$-optimal to all admissible controls. The solution to \eqref{surplus process} 
          is unique in a weak sense and has a unique invariant measure under this $\varepsilon$-optimal 
          control.
      \end{lemma}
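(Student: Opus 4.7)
The plan is to construct $u^\varepsilon$ by a three-stage density argument that moves from a general admissible control to a continuous feedback, controlling the loss in the long-run average reward at each stage, and then to verify weak uniqueness and uniqueness of the invariant measure under the resulting closed-loop dynamics.

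For the construction, by \eqref{optimal_value} pick first an admissible $u^{*}=(a^{*},s^{*},l^{*})$ with $\gamma(u^{*})>\bar{\gamma}-\varepsilon/3$. Next, discretise the compact admissible set $\mathcal{U}=[M_a,1]\times[0,M_s]\times[M_l,1]$ into a finite grid $\mathcal{U}_\delta$ and partition $G\times\mathcal{M}$ into finitely many Borel cells, producing a feedback $u^{*,\delta}$ that is piecewise constant in $(x,i)$ with values in $\mathcal{U}_\delta$; the assumption stated just before this section guarantees a weak-sense unique solution for such controls. Finally, mollify $u^{*,\delta}(\cdot,i)$ on $G$ by convolution with a smooth kernel $\eta_\rho$ and project the result back into $\mathcal{U}$ to obtain a continuous feedback $u^\varepsilon(x,i)$. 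Using the local Lipschitz continuity and linear growth of $b$ and $\sigma$ in $u$ together with the continuity of the running reward $f$ in $u$, a stability estimate at the level of the stationary distribution (produced by Theorem \ref{stationary}) would yield $|\gamma(u^{*,\delta})-\gamma(u^{*})|<\varepsilon/3$ and $|\gamma(u^\varepsilon)-\gamma(u^{*,\delta})|<\varepsilon/3$, so summing gives $\gamma(u^\varepsilon)\ge\bar{\gamma}-\varepsilon$.

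For the uniqueness claims, since $u^\varepsilon$ is continuous in $x$ with values in the compact set $\mathcal{U}$, the closed-loop coefficients $b(x,i,u^\varepsilon(x,i))$ and $\sigma(x,i,u^\varepsilon(x,i))$ inherit linear growth and local Lipschitz continuity from the open-loop coefficients. The regime-switching SDE theory of \cite{yin2010hybrid} therefore produces a unique strong (hence weak) solution of \eqref{surplus process} on the reflected domain $G$. For the invariant measure, the diffusion is uniformly non-degenerate, since $\sigma^{2}(x,i,u^\varepsilon(x,i))\ge M_a^{2}\sigma_C^{2}(i)>0$; combined with the irreducibility of the generator $Q$ and the positive recurrence of $Y(t)$ on $G\times\mathcal{M}$ guaranteed by Assumption \ref{assumptionA} and Lemma \ref{lemma1}, Theorem \ref{stationary} yields a unique invariant distribution $\nu^\varepsilon$ for the closed-loop process.

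The main obstacle will be controlling the ergodic quantity $\gamma(\cdot)$ across the two perturbations used to build $u^\varepsilon$: unlike a finite-horizon cost, $\gamma(u)$ is not bounded via a direct Gronwall estimate, because it is given by integration of $f$ against the invariant measure $\nu^u$, and one must therefore show that the map $u\mapsto\nu^u$ is continuous in a topology fine enough to preserve the integrals appearing in \eqref{reward functionf(x)}. I plan to handle this by exploiting the uniform positive recurrence on the bounded reflected domain $G\times\mathcal{M}$ together with the tightness and martingale-problem machinery of Theorems \ref{tight} and \ref{limitx}, adapted to compare two closed-loop diffusions (driven by $u^{*}$, $u^{*,\delta}$, $u^\varepsilon$) rather than a diffusion and its chain approximation; the non-degeneracy of $\sigma^{2}$ ensures that limit measures charge the interior and thus integrate the continuous reward in a stable way.
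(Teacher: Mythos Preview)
The paper's own proof is a one-line deferral to \cite{kushner1978optimality}, so there is no detailed argument to compare against; your three-stage density plan and your identification of the ergodic cost $\gamma(\cdot)$ as the delicate object are in the right spirit, and are essentially what the cited reference carries out.

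The genuine gap is in your second stage. You start from a near-optimal admissible $u^{*}$, which in this setting is a progressively measurable (and, after Section~6.2, relaxed) process, not a priori a feedback; even if one restricts to Borel feedbacks, a merely measurable $u^{*}(x,i)$ need not yield a well-posed SDE or an invariant measure. You then propose to manufacture a piecewise-constant feedback $u^{*,\delta}$ by partitioning $G\times\mathcal{M}$ and to bound $|\gamma(u^{*,\delta})-\gamma(u^{*})|$ via ``a stability estimate at the level of the stationary distribution (produced by Theorem~\ref{stationary})''. But Theorem~\ref{stationary} gives existence and the ergodic representation for a \emph{single} positive-recurrent feedback process; it says nothing about continuity of $u\mapsto\nu^{u}$, and there may be no $\nu^{u^{*}}$ to perturb in the first place. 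This passage---from a general admissible or relaxed control to a nearby regular feedback for the \emph{ergodic} criterion---is exactly the substance of the Kushner result the paper cites, and it is normally carried out through occupation-measure compactness and the convex structure of the set of achievable stationary pairs, not through a direct stability estimate on invariant measures. Your fallback of adapting Theorems~\ref{tight} and~\ref{limitx} does not obviously close this either, since those results compare a chain to its diffusion limit under the \emph{same} sequence of controls, not two diffusions under different controls, one of which may be non-feedback. A minor additional point: mere continuity of $u^{\varepsilon}$ does not make the closed-loop coefficients locally Lipschitz; you need $u^{\varepsilon}$ itself locally Lipschitz (which your mollification does deliver), so state that hypothesis rather than ``continuous'' when invoking \cite{yin2010hybrid} for strong well-posedness.
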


      \begin{proof}
          The existence of a smooth $\varepsilon$-optimal can be guaranteed by modifying the method 
          in \cite{kushner1978optimality} for our formulation.
      \end{proof}

      \begin{theorem}
          Assume the conditions of Theorems \ref{limitx} and \ref{inverseJ} are satisfied. 
          Then as $h\to 0$,
          \begin{align}\label{gammato}
              \bar{\gamma}^h(x,i)\to\bar{\gamma}.
          \end{align}
      \end{theorem}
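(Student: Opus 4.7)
The plan is to establish \eqref{gammato} by separately proving the two inequalities $\limsup_{h \to 0}\bar{\gamma}^h \le \bar{\gamma}$ and $\liminf_{h \to 0}\bar{\gamma}^h \ge \bar{\gamma}$, each of which rests on the weak-convergence machinery set up in Theorems~\ref{tight}--\ref{inverseJ} combined with the invariant-measure representation \eqref{gammare2} of the average reward.

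For the upper bound I would fix, for every $h>0$, a nearly optimal relaxed control $m^h\in\varGamma^h$ with $\gamma^h(m^h)\ge\bar{\gamma}^h-h$. Theorem~\ref{tight} supplies tightness of the rescaled sequence $\{\hat{\xi}^h,\hat{\alpha}^h,\hat{m}^h,\hat{W}^h,\hat{z}^h,\hat{g}^h,\hat{T}^h\}$, so along a subsequence it converges weakly to a limit $(\hat{x},\hat{\alpha},\hat{m},\hat{W},\hat{z},\hat{g},\hat{T})$. Applying Theorem~\ref{limitx} and inverting the time change as in Theorem~\ref{inverseJ}, the untransformed limit $(x,\alpha,m,W)$ is a weak solution of \eqref{surplus process} driven by an admissible relaxed control $m$. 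Lemma~\ref{lemma1} yields positive recurrence of the limit system, so the stationary laws $\omega^h(m^h)$ are tight on $G\times\mathcal{M}$, and any subsequential limit must coincide with the unique invariant measure $\nu$ of the limit diffusion guaranteed by Theorem~\ref{stationary}. Combining the continuity of $f(\cdot,\cdot,\cdot)$ with a uniform second-moment bound on $\hat{\xi}^h$ inherited from the Lyapunov argument in the proof of Theorem~\ref{stationary}, one may pass to the limit in $\gamma^h(m^h)=\sum_{x,i}f(x,i,m^h)\omega^h(x,i,m^h)$ to obtain $\lim_h \gamma^h(m^h)=\gamma(m)\le\bar{\gamma}$.

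For the lower bound, fix $\varepsilon>0$ and invoke Lemma~\ref{epsilonlemma} to produce a continuous feedback control $u^\varepsilon(\cdot)$ with $\gamma(u^\varepsilon)\ge\bar{\gamma}-\varepsilon$ and a unique invariant measure. Define its discrete counterpart by $u^{\varepsilon,h}(x,i)=u^\varepsilon(x,i)$ for $(x,i)\in S_h\times\mathcal{M}$; this is admissible for the approximating chain. Rerunning the tightness and identification steps of Theorems~\ref{tight}--\ref{inverseJ} along this fixed strategy, the weak limit of $(\hat{\xi}^h,\hat{\alpha}^h)$ is the weak solution to \eqref{surplus process} driven by $u^\varepsilon$, and weak-sense uniqueness forces the limit to be independent of the subsequence. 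Consequently $\gamma^h(u^{\varepsilon,h})\to\gamma(u^\varepsilon)$, which gives $\liminf_{h\to 0}\bar{\gamma}^h \ge \gamma(u^\varepsilon)\ge\bar{\gamma}-\varepsilon$, and letting $\varepsilon\downarrow 0$ closes the argument.

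The principal obstacle I expect is the interchange of the ergodic limit $T\to\infty$ with the mesh limit $h\to 0$: unlike the discounted setting, weak convergence of paths is not by itself sufficient, and one needs weak convergence of the \emph{stationary} laws $\omega^h(m^h)$ together with uniform integrability of $f$ against them. I would prove the latter via a Lyapunov bound, e.g.\ $V(x,i)=x^2$ as used in the proof of Theorem~\ref{stationary}, applied uniformly in $h$ to the locally consistent chain, exploiting that its drift and variance approximate those of the generator $\mathcal{L}^u$ up to $o(\tilde{\Delta}t^h)$. To identify each subsequential limit of $\omega^h(m^h)$ as an invariant measure of the limiting diffusion, I would pass to the limit in the chain's stationarity identity $\int (P^h_t\varphi)\,d\omega^h = \int \varphi\,d\omega^h$ for sufficiently smooth test functions $\varphi$, and then invoke the uniqueness afforded by Theorem~\ref{stationary} to upgrade subsequential convergence to full convergence.
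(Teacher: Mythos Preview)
Your overall architecture is the same as the paper's: an upper bound obtained by feeding a (near-)optimal control for the approximating chain through the weak-convergence machinery of Theorems~\ref{tight}--\ref{inverseJ}, and a lower bound via Lemma~\ref{epsilonlemma} applied with the continuous $\varepsilon$-optimal feedback $u^\varepsilon$ restricted to the grid. The paper takes the exact optimizer $\tilde u^h$ rather than a $(\bar\gamma^h-h)$-near optimizer, but this is immaterial.

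The one methodological difference worth noting is in how the ergodic limit is handled. You propose to argue directly with the invariant laws $\omega^h$ via \eqref{gammare2}: prove their tightness from a uniform-in-$h$ Lyapunov bound, identify subsequential limits as invariant for the limit diffusion by passing to the limit in the stationarity identity, and invoke uniqueness from Theorem~\ref{stationary}. The paper instead exploits representation~\eqref{gammaexp}, writing $\gamma^h(u)=\mathbb{E}^u_{\omega^h(u)}\int_0^1 f(\xi^h(s),\alpha^h(s),u^h(\cdot))\,\mathrm{d}s$ as a \emph{unit-interval} expectation under the stationary initial law, and then passes to the limit $h\to 0$ in this finite-horizon functional using the weak convergence already established. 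This trick compresses your three steps (tightness of $\omega^h$, identification of the limit as invariant, uniform integrability of $f$) into a single application of finite-time weak convergence; of course, implicitly it still requires that the stationary initial laws converge, which is precisely what your more explicit route supplies. Your approach is therefore a correct and arguably more transparent unpacking of what the paper leaves terse.
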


      \begin{proof}
          First, to prove 
          \begin{align}
              \bar{\gamma}^h(x,i)\le\bar{\gamma}.
          \end{align}
          Let $\tilde{u}(\cdot)$ be the optimal control and $\tilde{m}^h(\cdot)$ be the relaxed 
          control representation of $\tilde{u}^h(\cdot)$. Then $\bar{\gamma}^h(x,i)=\gamma^h(\tilde{u}^h)$. 
          Hence, in view of \eqref{gammaexp},
          \begin{equation}\label{gammale}
              \begin{aligned}
                  \bar{\gamma}^h(x,i)&=\mathbb{E}^{\tilde{u}^h}\int^1_0 f(\xi^h(s),\alpha^h(s),u(\xi^h(s),\alpha^h(s)))\mathrm{d}s\\
                  &=\mathbb{E}^{\tilde{u}^h}\int^1_0\int_{\mathcal{U}}f(\xi^h(s),\alpha^h(s),\phi)\tilde{m}^h_s(\mathrm{d}\phi)\mathrm{d}s\\
                  &\to\mathbb{E}^{\tilde{m}}\int^1_0\int_{\mathcal{U}}f(x(s),\alpha(s),\phi)\tilde{m}_s(\mathrm{d}\phi)\mathrm{d}s\\
                  &=\lim_{T}\frac{1}{T}\mathbb{E}^{\tilde{m}}\int^T_0\int_{\mathcal{U}}f(x(s),\alpha(s),\phi)\tilde{m}_s(\mathrm{d}\phi)\mathrm{d}s\\
                  &=\gamma(\tilde{m})\\
                  &\le\bar{\gamma},
              \end{aligned}
          \end{equation}
          where $\gamma(\tilde{m})$ is the optimal value of the performance function 
          for the limit stationary process.

          On the other hand, from Lemma \ref{epsilonlemma}, we have $\varepsilon$-optimal control 
          $u^\varepsilon$ such that
          \begin{equation}\label{gammage}
              \begin{aligned}
                  \bar{\gamma}^h(x,i)&\ge\gamma^h(u^\varepsilon,i)\\
                  &=\mathbb{E}^{u^\varepsilon}\int^1_0f(\xi^h(s),\alpha^h(s),u^\varepsilon(\xi^h(s),\alpha^h(s)))\mathrm{d}s\\
                  &\to\mathbb{E}^{u^\varepsilon}\int^1_0f(x(s),\alpha(s),u^\varepsilon(x(s),\alpha(s)))\mathrm{d}s\\
                  &=\lim_T\frac{1}{T}\mathbb{E}^{u^\varepsilon}\int^T_0f(x(s),\alpha(s),u^\varepsilon(x(s),\alpha(s)))\mathrm{d}s\\
                  &=\gamma(u^\varepsilon,i)\\
                  &\ge\bar{\gamma}-\varepsilon.
              \end{aligned}
          \end{equation}
          Combining \eqref{gammale} and \eqref{gammage} yields \eqref{gammato}.
      \end{proof}
    
    \section{Numerical algorithm}
        In this part, we will adopt and appropriately modify the hybrid deep learning Markov 
        chain approximation algorithm proposed by \cite{ChengJinYang2020}. This method 
        combines the numerical stability of MCAM, the functional generalization and 
        computational efficiency of neural networks, thereby avoiding the computational 
        explosion inherent in traditional approaches. We use a neural network to model the relationship between 
        the control strategy $u=(a,s,l)$ and the state value $(x,i)$. Neural networks possess 
        excellent nonlinear function fitting capabilities, enabling them to fit the optimal 
        control corresponding to discrete states and learn to output the optimal control 
        corresponding to continuous states. The core process of this method involves first 
        performing dynamic programming on a coarse state lattice with low precision to 
        ensure appropriate initial values, followed by a neural network optimization algorithm 
        on a fine state lattice with high precision to output a continuous control strategy.
        
        \subsection{Dynamic programming on a coarse state lattice}
        When solving the insurer's optimal reinsurance-investment-dividend control problem, 
        a core challenge lies in the high dimensionality of the state and control spaces. 
        Once both are discretized simultaneously, the computational load expands rapidly. 
        To alleviate the curse of dimensionality, neural networks are often used for the 
        approximation and optimization of continuous control. However, relying solely 
        on gradient-based training tends to get trapped in local optimization and 
        cause training instability. Even with improved gradient descent algorithms such as 
        the Adam optimizer, a large number of iterations are usually still required to 
        escape local optimization.

        The hybrid deep learning Markov chain approximation algorithm is designed precisely 
        to address this issue. It first uses dynamic programming, which has stable convergence, 
        to obtain a roughly shaped optimal strategy on coarsely discretized state lattices.
        Then, it takes this coarse solution as the initialization signal for the neural network 
        and performs refined optimization of continuous control on a finely discretized state 
        lattices. This two-stage mechanism not only avoids the instability caused by random 
        initialization, but also significantly enhances computational efficiency and convergence 
        robustness.

        We adopt two discretization parameters---coarse 
        and fine---to discretize the state space. The fine discretization state space is denoted as 
        $\{x_i\}^n_{i=1}$, and the coarse discretization state space is denoted as $\{y_j\}^m_{j=1}$, 
        such that $m<n$ and the discretization parameters $\Delta x<\Delta y$. Without
        loss of generality, the coarse state space can be taken as a subset of the fine 
        state space. 
        Because our objective is to maximize the long-run average reward, we employ 
        the Relative Value Iteration (RVI) rather than the standard value iteration on the 
        coarse lattice $\{y_j\}^m_{j=1}$ and obtain a rough solution.
        Denote the value function of $\{y_j\}^m_{j=1}$ as $U(y)$.  
        The relative value iteration method, introduced in \cite{JinYangYin2018} is described 
        as follows.
        \begin{enumerate}
            \item Set $t=0$. $\forall(y_j,\alpha_l)$, initialize value function and relative value function  
            $U_0(y_j,\alpha_l)=\tilde{U}_0(y_j,\alpha_l)=0$.
            \item \label{step2}Select a $y_0$, traverse $\{y_j\}^m_{j=1}$, and search for the optimal strategy 
            for each grid as follows:
            \begin{equation}\label{rvi1}
                \begin{aligned}
                    \tilde{u}^\ast_{t+1}&=\mathop{\arg\max}\limits_u S(y_j,\alpha_l,U_{t},u),\\
                    S(y_j,\alpha_l,U_{t},u)&=p((y_j,\alpha_l),(y_{j+1},\alpha_l)\vert u)\tilde{U}_{t}(y_{j+1},\alpha_l)
                    +p((y_j,\alpha_l),(y_{j-1},\alpha_l)\vert u)\tilde{U}_{t}(y_{j-1},\alpha_l)\\
                    &+\sum_{s\neq l}p((y_j,\alpha_l),(y_j,\alpha_s)\vert u)\tilde{U}_{t}(y_j,\alpha_s)
                    +f(y_j,\alpha_l,u)\Delta t(y_j,\alpha_l,u)
                \end{aligned}
            \end{equation}
            where 
            \begin{align}\label{rvi2}
                \tilde{U}_{t}(y_j,\alpha_l)=U_{t}(y_j,\alpha_l)-U_{t}(y_0,\alpha_l),
            \end{align}
            transition probabilities $p(\cdot\vert u)$ and interpolation of 
            time $\Delta t$ are shown in \eqref{probabilities}.
            \item Update $U_{t+1}(y_j,\alpha_l)=S(y_j,\alpha_l,U_{t},u^\ast_t)$. If $\max\vert U_{t+1}(y_j,\alpha_l)
            -U_{t}(y_j,\alpha_l)\vert<\varepsilon_r$, the iteration ends and the optimal strategy 
            $\tilde{u}=u^\ast_{t+1}$ and value function $U_{t+1}$ are output. 
            Otherwise, update $t\to t+1$, return to step \ref{step2}.
        \end{enumerate}

        \begin{remark}
            A well-specified admissible strategy set substantially reduces futile search 
            and improves numerical stability and convergence speed. Since the admissible 
            strategy set has been defined in \eqref{admissible strategy set}, we enforce 
            the same constraints when running RVI on the coarse lattice: 
            $a\in[M_a,1]$, $s\in[0,M_s]$, $l\in[M_l,1]$, $s+l\le 1$, and $s=l=0$ whenever $x\le K$. 
            Embedding feasibility before Bellman evaluations prevents infeasible candidates 
            from entering the comparison, thereby stabilizing and accelerating the iteration.
        \end{remark}

        \begin{remark}
            To accurately characterize the behavior of the optimal strategy near $x=K$, 
            we align the step sizes of both the coarse and fine grids with K when setting 
            them, thereby making the regulatory barrier $K$ a common node of the two sets 
            of grids. This alignment operation significantly reduces numerical oscillations 
            and strategy chattering at the regulatory threshold, and improves the convergence 
            quality of RVI in ke regions.
            
        \end{remark}

        \begin{remark}
            The optimal long-run average reward solved by RVI is a global constant independent 
            of the initial state. Here, the reference state $y_0$ is only used for centralization 
            and does not affect the optimal strategy or convergence results. For ease of 
            implementation and reproducibility, we will fix an internal grid point as $y_0$ 
            in the numerical experiments of the next section.
        \end{remark}

        \subsection{Neural network optimization on a fine state lattice}
        After obtaining the coarse strategy $\tilde{u}$ by running RVI on the 
        coarse grid, we perform refined optimization of the strategy on the fine grid. 
        First, we need to take $\tilde{u}$ as the target and fit it using a neural network, 
        such that the neural network output sufficiently approximates $\tilde{u}$, thereby 
        obtaining the initial parameters of the neural network. The initial 
        parameters $\theta_0$ is as follows:
        \begin{align}
            \theta_0=\mathop{\arg\min}\limits_{\theta}\sum^m_{j=0}\sum^{m_0-1}_{l=0}[\tilde{u}(y_j,\alpha_l)-N(y_j,\alpha_l\vert \theta)]^2,
        \end{align}
        where $N(y_j\vert \theta)$ denotes the output value of the neural network, and 
        $\theta$ is the parameter of the neural network. We can obtain good initial 
        parameters by minimizing the mean squared error using the gradient descent 
        method. At this point, the neural network will maintain an optimal strategy output 
        almost identical to that of RVI.

        After obtaining the initial parameters of the neural network, since both the 
        inputs and outputs of the neural network are continuous, we no longer perform 
        pointwise optimization as in RVI. Instead, we focus on improving the strategy 
        across most states and conduct global optimization over the entire state space. 
        We define the global optimization objective function $G$ as 
        \begin{equation}
            \begin{aligned}
                G \triangleq G(V(x_1,\alpha_1),\cdots,V(x_n,\alpha_n))= \frac{1}{n}\sum^n_{i=1}V(x_i,\alpha_i).
            \end{aligned}
        \end{equation}
        We achieve global optimization by maximizing $G$ as
        \begin{equation}
            \begin{aligned}
                \theta^k= \arg\max\limits_{\theta}G,
            \end{aligned}
        \end{equation}
        where $k$ denote the $k$-th iteration. In each iteration, according to the chain rule, 
        the parameter gradients can be directly obtained using the backpropagation algorithm. 
        We can perform parameter optimization using the formula as
        \begin{equation}
            \begin{aligned}
                \theta^k_{z} = \theta^k_{z-1} + h_1 \cdot \left[\frac{\partial G(\cdot,\theta)}{\partial \theta}\vert_{\theta=\theta^k_{z-1}}\right],
            \end{aligned}
        \end{equation}
        where $h_1$ is the learning rate. The gradient descent algorithm will terminate 
        when 
        \begin{equation}
            \begin{aligned}
                \vert G(\cdot,\theta^k_{z})-G(\cdot,\theta^k_{z+1})\vert < \epsilon_4,
            \end{aligned}
        \end{equation}
        where $\epsilon_4$ is a small positive number.

        \begin{remark}
            Since the admissible ranges of our three control strategies are inconsistent when 
            designing the neural network, we have adopted a multi-layer perceptron 
            structure consisting of a shared feature layer plus three independent policy networks. Additionally, we incorporated separate 
            output functions in the forward propagation to prevent infeasible actions from 
            participating in training. For ease of understanding, we provide a simplified 
            neural network structure as shown in Figure \ref{figure: network}.
        \end{remark}

        \begin{figure}[h!]
            \centering
            \captionsetup{font={small, stretch=1.312}}
            \includegraphics[width=0.6\linewidth]{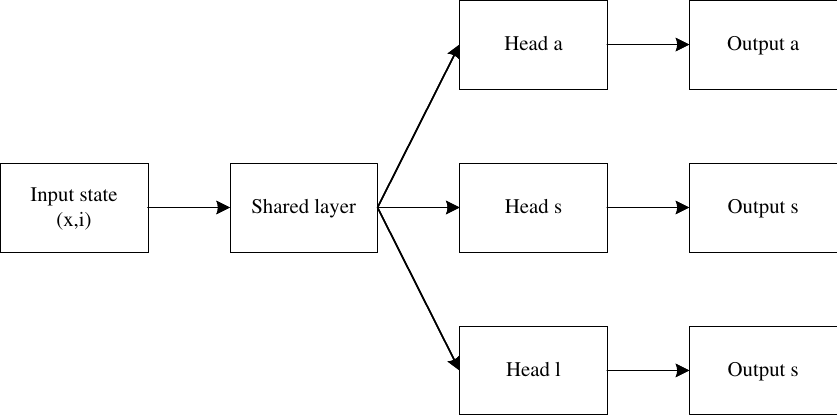}
            \caption{The neural network structure.}
            \label{figure: network}
        \end{figure}

        \subsection{Algorithmic procedure}
        \label{sec1}
        After the relative value iteration is run on the roughly divided state space to obtain 
        the rough control $\tilde{u}$, we consider using a neural network to fit $\tilde{u}$ on 
        the refined divided state space and further optimize it. The following is the global 
        algorithm. 
        \begin{enumerate}
            \item Select the boundary parameter $B$ of the surplus state space and the discretization parameters 
            $\Delta x$ and $\Delta y$, generate two equal difference sequences $\{x_i\}^n_{i=1}$ 
            and $\{y_j\}^m_{j=1}$, where $\{y_j\}^m_{j=1}$ is a subsequence of $\{x_i\}^n_{i=1}$ 
            such that $\Delta y=k\Delta x$ for $k$ is a positive integer, 
            and construct the regime-switching space $\{\alpha_l\}^{m_0-1}_{l=0}$ as \eqref{regime-switching}.
            \item Let the convergence accuracy and maximum iteration times of global iteration be 
            denoted as $\varepsilon_1$ and $w_1$, respectively. 
            Then the convergence accuracy of relative value iteration, neural network initial 
            parameter fitting, and neural network optimization are $\varepsilon_2$, $\varepsilon_3$, and 
            $\varepsilon_4$, respectively.
            \item Initialize the value function $V^0(x_i,\alpha_l)=U^0(y_j,\alpha_l)=0$ for all 
            $x_i\in\{x_i\}^n_{i=1}$,$y_j\in\{y_j\}^m_{j=1}$ and $\alpha_l\in\{\alpha_l\}^{m_0-1}_{l=0}$.
            \item In the $k$th round of global iteration, 
            \begin{enumerate}
                \item \label{stepa}Run the relative value iteration algorithm on $\{y_j\}^m_{j=1}$ as 
                \eqref{rvi1} and \eqref{rvi2}, then the 
                value function $U^{k-1}$ on the coarse grid is obtained from the previous global iteration. When 
                $U^k_t(y_j,\alpha_l)$ satisfies 
                \begin{align}
                    \max_{j,l}\vert U^k_t(y_j,\alpha_l)-U^k_{t-1}(y_j,\alpha_l)\vert<\varepsilon_2, 
                \end{align}
                the relative value iteration algorithm is terminated, and the rough control 
                $\tilde{u}^k(y_j,\alpha_l)$ in the rough partition state space is output. 
                \item Obtain initial parameters by neural network fitting as follows: 
                \begin{align}
                    \theta^k_0=\mathop{\arg\min}\limits_{\theta}\sum^m_{j=1}\sum^{m_0-1}_{l=0}[\tilde{u}^k(y_j,\alpha_l)-N(y_j,\alpha_l\vert \theta)]^2,
                \end{align}
                where $N(y_j,\alpha_l\vert\theta)$ is the output of neural network with parameters $\theta$ and inputs $y_j$ and $\alpha_l$.
                When the change value of mean square error loss is less than $\varepsilon_3$, the 
                fitting iteration is terminated.
                \item Use neural network for global optimization on $\{x_i\}^n_{i=1}$, and the 
                gradient descent algorithm to maximize the global optimization objective as
                \begin{equation}
                \begin{aligned}
                   G=\frac{1}{n}\sum^n_{i=1}\sum^{m_0-1}_{l=0}S(x_i,\alpha_l,V^{k-1},N(y_j,\alpha_l\vert \theta)),
                   \theta^k=\mathop{\arg\max}\limits_{\theta}G,
                \end{aligned}
                \end{equation}
                where $V^{k-1}$ is obtained from the previous global iteration, 
                the form $S(x_i,\alpha_l,V^{k-1},N(y_j,\alpha_l\vert \theta^k))$ is similar to \eqref{rvi1}. When the different value 
                of the optimization objective $\Delta G<\varepsilon_4$, the optimization iteration 
                is terminated, and the optimal strategy $\hat{u}(x_i,\alpha_l)=N(x_i,\alpha_l\vert \theta^k)$ is output.
                \item Update the value functions 
                \begin{equation}
                    \begin{aligned}
                        V^k(x_i,\alpha_l)=S(x_i,\alpha_l,V^{k-1},\hat{u}(x_i,\alpha_l)),
                        U^k(y_j,\alpha_l)=S(y_j,\alpha_l,U^{k-1},\hat{u}(y_j,\alpha_l)).
                    \end{aligned}
                \end{equation}
                \item When $\sum^n_{i=1}\sum^{m_0}_{l=1}\vert V^k(x_i,\alpha_l)-V^{k-1}(x_i,\alpha_l)\vert<\varepsilon_1$ 
                or the maximum number $w_1$ of global iterations is reached, the cycle is terminated. 
                Otherwise, turn to step \ref{stepa} to continue the $(k+1)$th round.
            \end{enumerate}
        \end{enumerate}
        \begin{remark}
            Note that we need the state value function outside the state space to participate in the 
            operation when calculating $S(x,V,u)$ in a finite space. We can approximate the boundary 
            state as follows.
            \begin{align*}
                V(x_0-\Delta x)=2V(x_0)-V(x_0+\Delta x),\ \ V(x_n+\Delta x)=2V(x_n)-V(x_n-\Delta x).
            \end{align*}
        \end{remark}

        \begin{remark}
            Due to the properties of the long-run average reward function and the characteristics of 
            RVI, the initial value of the value function does not affect the final convergence result. 
            Therefore, we can set the initial value of the functions $U$ and $V$ to zero. In our numerical 
            experiments, we need to maintain both the value function $U$ and $V$ and the centered 
            value function $\tilde{U}$ and $\tilde{V}$. To ensure sufficient accuracy, the coarse 
            grid parameter $m$ should be set as large as possible, which refines the solution and 
            reduces discretization errors. However, a large $m$ leads to an extremely large search 
            domain for each iteration of the relative value iteration. Therefore, parallel computation 
            is crucial to significantly accelerate the computation speed on the coarse grid, thus 
            improving the overall efficiency of the experiment.
        \end{remark}

\section{Numerical example and further remarks} 
      Now, we will provide a numerical example to illustrate the feasibility of the proposed numerical algorithm,
      and further discussions will be conducted in this section. 

      \subsection{Numerical example}
      We present a numerical example in this section. The regime
      switching process is set to have two states for simplicity, with $\mathcal{M}=\{0,1\}$. 
      And the parameters are specified in Table \ref{parameters}.
        \begin{table}[htbp]
            \centering
            \tbl{Numerical solution parameters.} 
            {
                \begin{tabular}{c c c c c c c c}  
                    \toprule
                    Parameter & Value & Parameter & Value & Parameter   & Value  & Parameter & Value  \\
                    \midrule
                  $q_{01}$    & 0.05          & $\beta$  & 0.25  & $r_1$    & $[0.08, 0.05]$  &  $B$      & 10  \\
                    $q_{10}$  & 0.1       & $E(Y)$, $E(Y^2)$   & 1 &  $K$  & 2   &  $\Delta_y$ & 0.5  \\
                    $\lambda$  & $[0.13, 0.28]$ &  $\rho$  & 0.15  &$M_a$     & 0.4             & $\Delta_x$ & 0.1  \\
                   $\sigma_S$ & $[0.2, 0.4]$          &$r_2$      & 0.02    & $M_s$    & 0.3           & $M_l$      & 0.062 \\
                   \bottomrule
                \end{tabular}
            }
            \label{parameters} 
        \end{table}

      Through the numerical procedure outlined in Section \ref{sec1}, 
      the optimal value is obtained.  
      That is $\bar{\gamma}^h=0.28$. And the optimal strategies' images are shown in Figure \ref{figure: optimalcontrol_f_n}. 
      We also provide the value function and strategy convergence images, as in Figure \ref{figure: value function convergence} and Figure \ref{figure: optimalcontrol_convergence}, respectively.

      When $x\le K$, the model restricts the risk investment ratio $s=0$ and the dividend 
      ratio $l=0$. In regime $\alpha=0$, the optimal strategies appear more aggressive. The 
      retention ratio $a(x,0)$ remains largely stable with only minor fluctuations, indicating 
      that under lower claim intensity and lower volatility, the insurer does not need to rely 
      heavily on reinsurance. The risky investment ratio $s(x,0)$ becomes positive once the surplus 
      exceeds the regulatory threshold, then decreases somewhat and gradually levels off. Its 
      overall level is consistently higher than in regime $\alpha=1$, reflecting that in a market 
      with higher returns and lower risk, the insurer is willing to hold a larger share of risky 
      assets. The dividend ratio $l(x,0)$ turns positive above the regulatory threshold, and then declines slowly 
      and stabilizes, showing a preference to retain surplus for investment rather than distribute 
      large dividends when market conditions are favorable. 

      In regime $\alpha=1$, the optimal strategies are clearly more conservative. The retention 
      ratio $a(x,1)$ is higher than in regime $\alpha=0$ and shows some variation with surplus: 
      it stays high at low surplus levels, then decreases with rising surplus before reaching a 
      stable platform. This pattern reflects the fact that with higher claim intensity, 
      reinsurance is more expensive, making it optimal to retain more risk. The risky investment 
      ratio $s(x,1)$ also becomes positive when the surplus exceeds the regulation  barrier, but declines steadily with surplus 
      and then stabilizes at a level significantly lower than in regime $\alpha=0$, indicating 
      that in a market with lower returns, higher volatility, and higher claim intensity, the 
      insurer continues to reduce exposure to risky assets. The dividend ratio $l(x,1)$ turns 
      positive more quickly after the regulation barrier, then declines slowly and stabilizes, showing 
      almost the same pattern as in regime $\alpha=0$.
    
    \begin{figure}[h!]
        \centering
        
        \subfloat[Optimal reinsurance ratio $a$.]{
            \resizebox{0.45\linewidth}{!}{\includegraphics{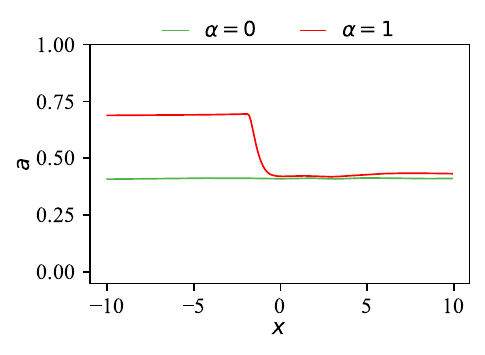}}
            \label{figure: a_f_n}
        }\hfill
        \subfloat[Optimal risky investment ratio $s$.]{%
            \resizebox{0.45\linewidth}{!}{\includegraphics{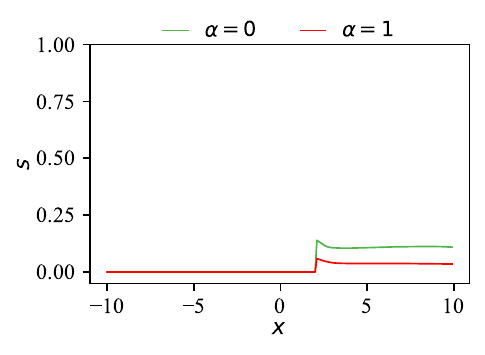}}
            \label{figure: s_f_n}
        }

        \vspace{0.3cm}
        \subfloat[Optimal dividend ratio $l$.]{%
            \resizebox{0.45\linewidth}{!}{\includegraphics{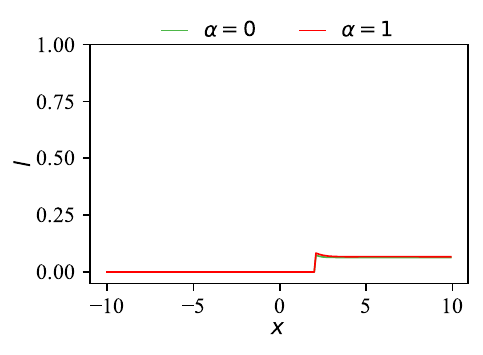}}
            \label{figure: l_f_n}
        }
        
        \caption[Optimal strategies with respect to the current surplus.]{Optimal strategies with respect to the current surplus.}
        \label{figure: optimalcontrol_f_n}
    \end{figure}

    \begin{figure}[h!]
        \centering
        \captionsetup{font={small, stretch=1.312}}
        \includegraphics[width=0.6\linewidth]{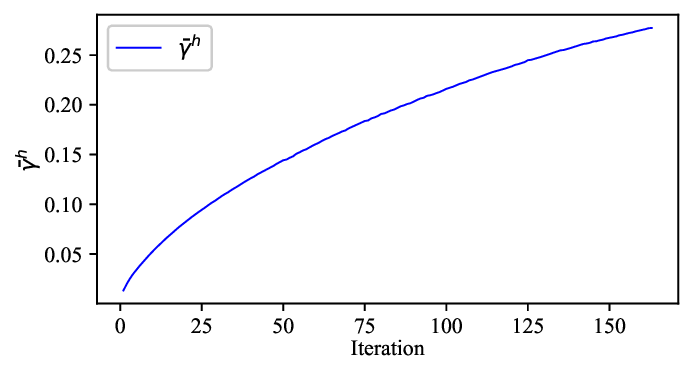}
        \caption{Optimal value function with respect to the iteration }
        \label{figure: value function convergence}
    \end{figure}

    \begin{figure}[h!]
        \centering
        
        \subfloat[Optimal reinsurance ratio $a$ under $\alpha=0$.]{
            \resizebox{0.45\textwidth}{!}{\includegraphics{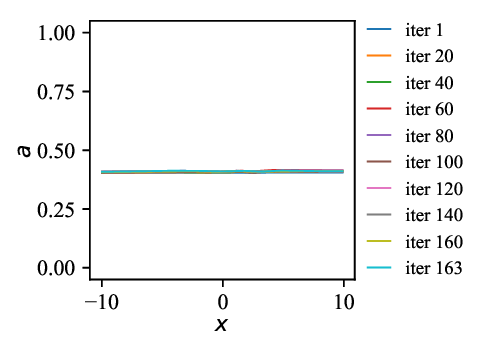}}
            \label{figure: a_c_alpha0}
        }\hfill
        \subfloat[Optimal risky investment ratio $a$ under $\alpha=1$.]{%
            \resizebox{0.45\textwidth}{!}{\includegraphics{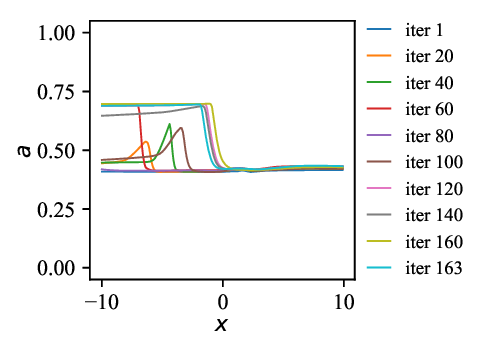}}
            \label{figure: a_c_alpha1}
        }

        \vspace{0.3cm}
        \subfloat[Optimal reinsurance ratio $s$ under $\alpha=0$.]{
            \resizebox{0.45\linewidth}{!}{\includegraphics{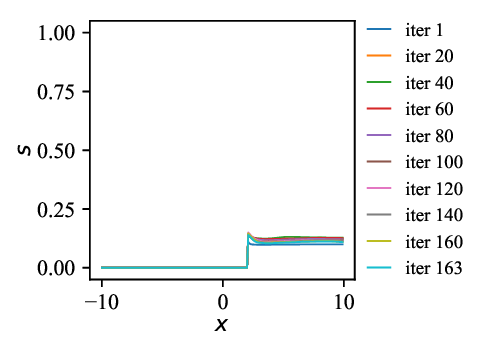}}
            \label{figure: s_c_alpha0}
        }\hfill
        \subfloat[Optimal risky investment ratio $s$ under $\alpha=1$.]{%
            \resizebox{0.45\linewidth}{!}{\includegraphics{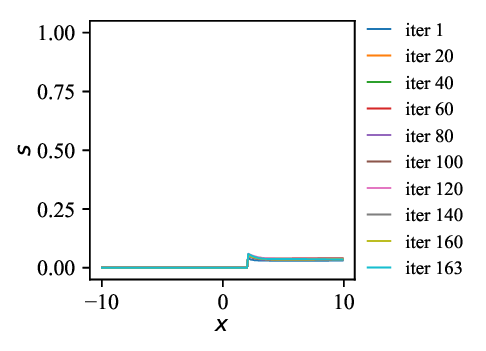}}
            \label{figure: s_c_alpha1}
        }

        \vspace{0.3cm}
        \subfloat[Optimal reinsurance ratio $l$ under $\alpha=0$.]{
            \resizebox{0.45\linewidth}{!}{\includegraphics{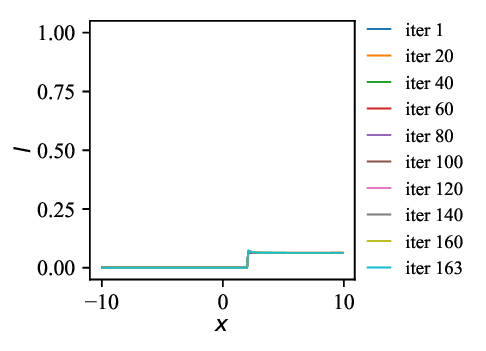}}
            \label{figure: l_c_alpha0}
        }\hfill
        \subfloat[Optimal risky investment ratio $l$ under $\alpha=1$.]{
            \resizebox{0.45\linewidth}{!}{\includegraphics{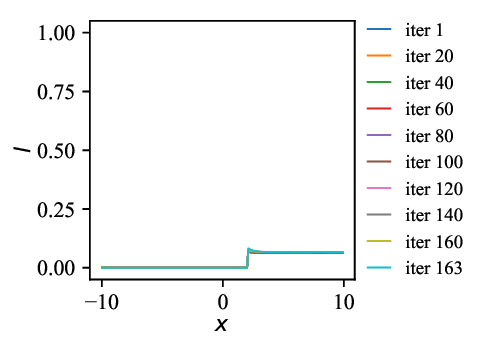}}
            \label{figure: l_c_alpha1}
        }
        \caption[Optimal strategies with respect to the iteration.]{Optimal strategies with respect to the iteration.}
        \label{figure: optimalcontrol_convergence}
    \end{figure}

      \subsection{Further remarks}
      This study is dedicated to establishing an integrated optimal control framework 
      for reinsurance, investment, and dividend strategies under solvency constraint (a given minimal cash requirement level) in a Markov regime-switching environment, with the core objective of maximizing 
      long-run average reward. The surplus process is governed by a continuous-time 
      Markov chain modeling shifts in the external environment, and a threshold-based 
      regulatory mechanism is introduced, permitting investment and dividend payments 
      only when the surplus level exceeds the pre-defined regulatory threshold.

      Given the analytical intractability of the coupled HJB equations arising 
      from the high-dimensional state space and regime-switching dynamics, a Markov 
      chain approximation method is adopted to discretize the original stochastic control 
      problem into a finite Markov decision process. Subsequently, a neural network-based 
      approximate dynamic programming approach is employed to obtain numerical solutions 
      in a computationally efficient manner. The convergence of the approximation scheme 
      is rigorously proven.

      For future research, the Markov chain approximation technique and the 
      neural policy iteration method developed here can be extended to a broader 
      class of constrained stochastic control problems featuring regulatory constraints, 
      regime-switching, and diffusion dynamics. Furthermore, efforts may be devoted to 
      optimal control problems with a long-run average objective subject to more 
      sophisticated and complex regulatory constraints.

\section*{Disclosure statement}
The authors report that there are no
competing interests to declare and there is no data set associated with the paper.

\section*{Funding}
The research was supported by the the National Natural Science Foundation of China (Grant No. 12001068). 

\bibliographystyle{apacite}
\bibliography{paper_zeng_li}

@article{zhou2024stochastic,
  title={A stochastic hybrid optimal control with regime switching and recursive cost functional},
  author={Zhou, Yue and Yong, Jiongmin},
  journal={Mathematical Control and Related Fields},
  volume={14},
  number={4},
  year={2024},
  publisher={American Institute of Mathematical Sciences}
}

@article{tapiero1983optimal,
  title={Optimal investment-dividend policy of an insurance firm under regulation},
  author={Tapiero, Charles S and Zuckerman, Dror and Kahane, Yehuda},
  journal={Scandinavian Actuarial Journal},
  volume={1983},
  number={2},
  pages={65--76},
  year={1983},
  publisher={Taylor \& Francis}
}

@article{avanzi2024optimal,
  title={Optimal reinsurance design under solvency constraints},
  author={Avanzi, Benjamin and Lau, Hayden and Steffensen, Mogens},
  journal={Scandinavian Actuarial Journal},
  volume={2024},
  number={4},
  pages={383--416},
  year={2024},
  publisher={Taylor \& Francis}
}

@inproceedings{de1957impostazione,
  title={Su un’impostazione alternativa della teoria collettiva del rischio},
  author={De Finetti, Bruno},
  booktitle={Transactions of the XVth international congress of Actuaries},
  volume={2},
  number={1},
  pages={433--443},
  year={1957},
  organization={New York}
}

@article{asmussen1997controlled,
  title={Controlled diffusion models for optimal dividend pay-out},
  author={Asmussen, S{\o}ren and Taksar, Michael},
  journal={Insurance: Mathematics and Economics},
  volume={20},
  number={1},
  pages={1--15},
  year={1997},
  publisher={Elsevier}
}

@article{gerber2006optimal,
  title={On optimal dividend strategies in the compound Poisson model},
  author={Gerber, Hans U and Shiu, Elias SW},
  journal={North American Actuarial Journal},
  volume={10},
  number={2},
  pages={76--93},
  year={2006},
  publisher={Taylor \& Francis}
}

@article{avanzi2009strategies,
  title={Strategies for dividend distribution: A review},
  author={Avanzi, Benjamin},
  journal={North American Actuarial Journal},
  volume={13},
  number={2},
  pages={217--251},
  year={2009},
  publisher={Taylor \& Francis}
}

@article{he2008optimal,
  title={Optimal control of the insurance company with proportional reinsurance policy under solvency constraints},
  author={He, Lin and Hou, Ping and Liang, Zongxia},
  journal={Insurance: Mathematics and Economics},
  volume={43},
  number={3},
  pages={474--479},
  year={2008},
  publisher={Elsevier}
}

@article{liang2011optimal,
  title={Optimal dividend and investing control of an insurance company with higher solvency constraints},
  author={Liang, Zongxia and Huang, Jianping},
  journal={Insurance: Mathematics and Economics},
  volume={49},
  number={3},
  pages={501--511},
  year={2011},
  publisher={Elsevier}
}

@article{sennott1989average,
  title={Average cost optimal stationary policies in infinite state Markov decision processes with unbounded costs},
  author={Sennott, Linn I},
  journal={Operations Research},
  volume={37},
  number={4},
  pages={626--633},
  year={1989},
  publisher={Informs}
}

@article{feinberg2012average,
  title={Average cost Markov decision processes with weakly continuous transition probabilities},
  author={Feinberg, Eugene A and Kasyanov, Pavlo O and Zadoianchuk, Nina V},
  journal={Mathematics of Operations Research},
  volume={37},
  number={4},
  pages={591--607},
  year={2012},
  publisher={INFORMS}
}

@article{sotomayor2011classical,
  title={Classical and singular stochastic control for the optimal dividend policy when there is regime switching},
  author={Sotomayor, Luz R and Cadenillas, Abel},
  journal={Insurance: Mathematics and Economics},
  volume={48},
  number={3},
  pages={344--354},
  year={2011},
  publisher={Elsevier}
}

@article{zhu2014dividend,
  title={Dividend optimization for a regime-switching diffusion model with restricted dividend rates},
  author={Zhu, Jinxia},
  journal={ASTIN Bulletin: The Journal of the IAA},
  volume={44},
  number={2},
  pages={459--494},
  year={2014},
  publisher={Cambridge University Press}
}

@article{tan2018optimal,
  title={Optimal dividend payment strategies with debt constraint in a hybrid regime-switching jump--diffusion model},
  author={Tan, Senren and Jin, Zhuo and Yin, G},
  journal={Nonlinear Analysis: Hybrid Systems},
  volume={27},
  pages={141--156},
  year={2018},
  publisher={Elsevier}
}

@article{bi2019optimal,
  title={Optimal mean--variance investment/reinsurance with common shock in a regime-switching market},
  author={Bi, Junna and Liang, Zhibin and Yuen, Kam Chuen},
  journal={Mathematical Methods of Operations Research},
  volume={90},
  number={1},
  pages={109--135},
  year={2019},
  publisher={Springer}
}

@article{colaneri2025optimal,
  title={Optimal investment and reinsurance under exponential forward preferences},
  author={Colaneri, Katia and Cretarola, Alessandra and Salterini, Benedetta},
  journal={Mathematics and Financial Economics},
  volume={19},
  number={1},
  pages={1--37},
  year={2025},
  publisher={Springer}
}

@misc{EU2009SolvencyII,
  author       = {{European Union}},
  title        = {Directive 2009/138/EC of the European Parliament and of the Council of 25 November 2009 on the taking-up and pursuit of the business of Insurance and Reinsurance (Solvency II)},
  year         = {2009},
  howpublished = {Official Journal of the European Union},
  volume       = {L335},
  pages        = {1--155},
 
}

@book{bertsekas1987dynamic,
  author    = {Dimitri P. Bertsekas},
  title     = {Dynamic Programming: Deterministic and Stochastic Models},
  publisher = {Prentice-Hall},
  year      = {1987},
  address   = {Englewood Cliffs, NJ}
}

@book{fleming2006controlled,
  title     = {Controlled {Markov} Processes and Viscosity Solutions},
  author    = {Fleming, Wendell H. and Soner, Halil Mete},
  edition   = {2},
  publisher = {Springer},
  year      = {2006},
  isbn      = {9780387260453},
  doi       = {10.1007/0-387-27611-4}
}

@article{jin2013numerical,
  author    = {Jin, Zhuo and Yang, Hailiang and Yin, G. George},
  title     = {Numerical Methods for Optimal Dividend Payment and Investment Strategies of Regime-Switching Jump Diffusion Models with Capital Injections},
  journal   = {Automatica},
  volume    = {49},
  number    = {8},
  pages     = {2317--2329},
  year      = {2013},
  doi       = {10.1016/j.automatica.2013.04.009}
}

@book{kushner1971introduction,
  author    = {Harold J. Kushner},
  title     = {Introduction to Stochastic Control Theory},
  publisher = {Holt, Rinehart and Winston},
  year      = {1971},
  address   = {New York}
}

@article{kushner1978optimality,
  author    = {Harold J. Kushner},
  title     = {Optimality conditions for the average cost per unit time problem with a diffusion model},
  journal   = {SIAM Journal on Control and Optimization},
  volume    = {16},
  number    = {2},
  pages     = {330--346},
  year      = {1978},
  doi       = {10.1137/0316023}
}

@book{kushner2001numerical,
  author    = {Harold J. Kushner and Paul Dupuis},
  title     = {Numerical Methods for Stochastic Control Problems in Continuous Time},
  edition   = {2},
  publisher = {Springer},
  year      = {2001},
  address   = {New York},
  series    = {Applications of Mathematics},
  volume    = {24},
  isbn      = {9780387952284}
}

@book{yin2010hybrid,
  author    = {G. George Yin and Chao Zhu},
  title     = {Hybrid Switching Diffusions: Properties and Applications},
  publisher = {Springer},
  year      = {2010},
  address   = {New York},
  isbn      = {9781441916202},
  doi       = {10.1007/978-1-4419-1621-9}
}

@article{JIANG20191,
title = {Optimal dividend policy when risk reserves follow a jump–diffusion process with a completely monotone jump density under Markov-regime switching},
journal = {Insurance: Mathematics and Economics},
volume = {86},
pages = {1-7},
year = {2019},
issn = {0167-6687},
doi = {https://doi.org/10.1016/j.insmatheco.2019.01.011},
author = {Zhengjun Jiang}
}

@article{yan2020optimal,
  title={Optimal investment-reinsurance policy with regime switching and value-at-risk constraint.},
  author={Yan, Ming and Yang, Hongtao and Zhang, Lei and Zhang, Shuhua},
  journal={Journal of Industrial and Management Optimization},
  volume={16},
  number={5},
  year={2020}
}

@article{colaneri2021optimal,
  author    = {Katia Colaneri and Alessandra Cretarola and Benedetta Salterini},
  title     = {Optimal Investment and Proportional Reinsurance in a Regime-Switching Market Model under Forward Preferences},
  journal   = {Mathematics},
  year      = {2021},
  volume    = {9},
  number    = {14},
  pages     = {1610},
  doi       = {10.3390/math9141610},
  publisher = {MDPI}
}

@article{JinYangYin2018,
  author  = {Zhuo Jin and Hailiang Yang and George Yin},
  title   = {Approximation of Optimal Ergodic Dividend Strategies Using Controlled Markov Chains},
  journal = {IET Control Theory \& Applications},
  year    = {2018},
  volume  = {12},
  number  = {16},
  pages   = {2194--2204},
  doi     = {10.1049/iet-cta.2018.5394},
}

@article{ChengJinYang2020,
  author    = {Xiang Cheng and Zhuo Jin and Hailiang Yang},
  title     = {Optimal Insurance Strategies: A Hybrid Deep Learning Markov Chain Approximation Approach},
  journal   = {ASTIN Bulletin},
  volume    = {50},
  number    = {2},
  pages     = {449--477},
  year      = {2020},
  doi       = {10.1017/asb.2020.9},
  publisher = {Cambridge University Press}
}

@article{YinZhangBadowski2003,
  author    = {Yin, G. and Zhang, Q. and Badowski, G.},
  title     = {Discrete-time singularly perturbed Markov chains: aggregation, occupation measures, and switching diffusion limit},
  journal   = {Advances in Applied Probability},
  volume    = {35},
  number    = {2},
  pages     = {449--476},
  year      = {2003},
  doi       = {10.1239/aap/1051201656}
}

@article{Li2011,
	author    = {Li, M.M. and Liu, Z.M. and Dong, H.},
	title     = {Esimates for the optimal control policy in the presence of regulations amd heavy tails},
	journal   = {Economic Modelling},
	volume    = {28},
	number    = {},
	pages     = {482--488},
	year      = {2011},
	doi       = {10.1016/j_econmod.2010.07.010}
}

\end{document}